\documentclass[11pt]{amsart}
\usepackage{tikz}
\usetikzlibrary{positioning}
\usepackage{latexsym}
\usepackage{amssymb}
\usepackage{amsmath}
\usepackage{enumitem}
\usepackage{graphicx}
\setlist[enumerate,1]{font=\upshape,label=(\roman*)}
\usepackage{ifdraft}
\newtheorem{theorem}{Theorem}[section]
\newtheorem{lemma}[theorem]{Lemma}
\newtheorem{proposition}[theorem]{Proposition}
\newtheorem{corollary}[theorem]{Corollary}

\theoremstyle{definition}
\newtheorem{remark}[theorem]{Remark}
\newtheorem*{remark*}{Remark}

\theoremstyle{definition}
\newtheorem{example}[theorem]{Example}
\newtheorem{definition}[theorem]{Definition}

\newcommand{\oincl}{\subseteq_{\omega}}
\newcommand{\ocl}{\operatorname{cl}_{\omega}}
\newcommand{\ointer}{\operatorname{int}_{\omega}}
\newcommand{\oclint}{\operatorname{clint}_{\omega}}
\newcommand{\oclsq}{\operatorname{clsq}_{\omega}}
\newcommand{\osupp}{\supp_{\omega}}
\newcommand{\osq}{\operatorname{sq}_{\omega}}
\renewcommand{\oint}{\PackageError{latticeopsys}{you meant ointer not oint}{}}
\DeclareMathOperator{\RefHull}{Ref}
\DeclareMathOperator{\supp}{supp}
\DeclareMathOperator{\ran}{ran}

\def\loopy#1#2{\def#1##1{\def\next{#2{##1}#1}\ifx##1\relax\let\next\relax\fi\next}}
\def\dc#1{\expandafter\def\csname#1\endcsname{\mathcal{#1}}}
\def\db#1{\expandafter\def\csname b#1\endcsname{\mathbb{#1}}}
\def\df#1{\expandafter\def\csname f#1\endcsname{\mathfrak{#1}}}
\loopy{\makemathcals}{\dc}
\loopy{\makemathbbs}{\db}
\loopy{\makemathfraks}{\df}
\makemathbbs   CRNQTZKDS \relax
\makemathcals  QWERTYUIOPASDFGHJKLZXCVBNM\relax
\makemathfraks QWERTYUIOPASDFGHJKLZXCVBNM\relax

\newcommand{\cl}[1]{\mathcal{#1}}
\newcommand{\bb}[1]{\mathbb{#1}}

\usepackage[colorlinks=true,linkcolor=blue,citecolor=blue]{hyperref}
\hypersetup{final}

\begin{document}

\title{Lattices of strongly reflexive masa-bimodules}

\author[R. H. Levene]{Rupert H. Levene}
\address{School of Mathematics and Statistics, University College Dublin, Belfield, Dublin~4, Ireland}
\email{rupert.levene@ucd.ie}

\author[Y.-F. Lin]{Ying-Fen Lin}
\address{Mathematical Sciences Research Centre, Queen's University Belfast, Bel\-fast, BT7 1NN, United Kingdom}
\email{y.lin@qub.ac.uk}

\author[I. G. Todorov]{Ivan G. Todorov}
\address{School of Mathematical Sciences, University of Delaware, 501 Ewing Hall, Newark, DE 19716, USA}
\email{todorov@udel.edu}

\begin{abstract}
We characterise the density of the positive rank one subspace of a
masa-bimodule in terms of its support.
We prove that strongly reflexive masa-bimodules
form a Boolean lattice under naturally defined operations.
We examine the lattice-theoretic properties of the class of
strongly reflexive masa-bimodules that are also operator systems and some
natural subclasses thereof, and
provide a topological description of the lattice operations in the case the
masa-bimodules arise from closed subsets of a locally compact group.
\end{abstract}

\date{16 June 2026}

\maketitle

\section{Introduction}

Subspaces of the algebra $M_n$ of all complex $n$ by $n$ matrices, arising from
graphs, appear frequently in many areas of linear algebra and operator theory.
For instance, in the case of undirected graphs $G$ on $n$ vertices,
the operator system $\cl S(G)$ of all matrices in $M_n$ supported on the adjacency relation
of $G$ plays a distinctive role in non-commutative geometry \cite{cvs1, cvs2}
and provides a crucial passage from classical to quantum zero-error information theory
\cite{dsw}. The space $\cl S(G)$ is, in addition, closely related to the positive completion
problem for partially defined positive definite matrices; in fact, a fundamental
characterisation of undirected graphs that admit positive completions is provided in \cite{pps}
in terms of positive cone generation by positive rank one operators lying in $\cl S(G)$.

The operator systems (or, more generally, spaces, in the directed graph case) of the form $\cl S(G)$ are precisely the
operator subsystems (or, more generally, subspaces) of $M_n$ that are bimodules over the
diagonal subalgebra of $M_n$.
In the infinite dimensional case, a similar coordinate description of spaces that are
bimodules over maximal abelian selfadjoint algebras (masas, for short) is still available, but
not as trivial. Such a coordinatisation was first developed in
\cite{a} and later refined in \cite{eks} (see also \cite{st}):
assuming the separability of the underlying Hilbert space $H$, and realising $H = L^2(X,\mu)$
for some standard measure space $(X,\mu)$ in such a way that
the masa $\cl D\subseteq \cl B(H)$ can be identified with the set of
multiplication operators by functions in $L^{\infty}(X,\mu)$,
the support $\kappa$ of a $\cl D$-bimodule $\cl S$ is an
\emph{$\omega$-closed} subset of the Cartesian product $X\times X$, that is,
a subset $\kappa\subseteq X\times X$ that is (marginally equivalent to)
the complement of a countable union of measurable rectangles.
Such sets $\kappa$ are
an infinite dimensional, measurable, counterpart of the adjacency relation of a
(directed or undirected) graph in the finite case; furthermore, in case $\cl S$ is reflexive
in the sense of \cite{a, eks, ls, lt}, $\kappa$ completely determines $\cl S$. We thus denote this
by writing $\cl S = \cl S(\kappa)$.

A fundamental difference between the finite and the infinite dimensional case is
the general lack, in the latter, of rank one operators in the
masa-bimodules of the form $\cl S(\kappa)$ (see \cite{eks, llt}).
This led to a thorough investigation of the presence and the size of the subspace
of a masa-bimodule $\cl S\subseteq \cl B(H)$, generated by its rank one operators
(referred to as the \emph{rank one subspace} of $\cl S$).
The \emph{strongly reflexive} masa-bimodules in $\cl B(H)$ (that is, the masa-bimodules
that are the reflexive hull \cite{a} of a set of rank one operators) were shown in
\cite{eks} to have weakly dense (but not necessarily weak* dense) rank one subspace.
Moreover, the corresponding supports $\kappa$ were characterised in \cite{eks} as
being the $\omega$-closure of their $\omega$-interior (see Section \ref{s_lpd} for
the precise definitions).

In the case where $\cl S$ is an operator system, the support
is a \emph{positivity domain} -- a symmetric set that is the
$\omega$-closure of its $\omega$-interior,
containing the diagonal in $X\times X$.
The abundance of positive rank one operators in $\cl S$ in an appropriate sense
was shown in \cite{llt} to be equivalent
to the positive completion property for the underlying support $\kappa$.
Such sets are special classes of
\emph{graphons} in the sense of graph limit theory \cite{lovasz-book} and, in the
context of locally compact groups, arise naturally from symmetric sets containing the neutral
element, and relate to positive extensions of positive definite functions,
partially defined over the given group \cite{llt}.

Motivated by the contexts described in the preceding paragraphs, in this
paper we provide a characterisation of the masa-bimodules, generated by
their positive rank one subspace in terms of the underlying support (Theorem \ref{th_psr}).
We examine the lattice-theoretic properties of the class $\fM$ of
strongly reflexive masa-bimodules. We show that, under natural operations,
$\fM$ is a Boolean lattice. Interestingly, the join operation can be defined thanks to a
rectangle generation for unions of supports (Theorem \ref{th_oc}) which
implies, in particular, a weak version of the Baire Category Theorem for
\emph{$\omega$-dense sets} (Corollary \ref{c_unions}).
Our approach relies on a Kuratowski-type regularisation
operation for measurable subsets of $X\times X$ that complements the study of $\omega$-closed sets as found in a series of articles (see \cite{eks, ks, llt, st} and the
references therein).

We show that the strongly reflexive operator systems that are masa-bimodules
form a sublattice of $\fM$ (Theorem \ref{th_com_rest}).
It was demonstrated in \cite{llt} that there are strongly reflexive operator systems with no positive rank one operators.
This is our motivation to introduce
\emph{positive-full} and \emph{positive-null} operator systems,
which lie at the extremal ends of the spectrum when it comes
to positive completion properties.
The positive-full operator systems admit a
lattice structure as well; while the positive-null operator systems are naturally not closed
under the formation of joins, they are, surprisingly, closed under complementation (see
Theorem \ref{th_com_rest}).

In the last section of the paper (Section \ref{s_mbtt}) we focus on strongly reflexive masa-bimodules of Toeplitz type.
Such masa-bimodules have supports of the form $E^* := \{(s,t)\in G\times G : ts^{-1}\in E\}$,
where $G$ is a locally compact group and $E\subseteq G$ is a closed symmetric set containing the neutral element of $G$, and
have received substantial attention in the literature
(see e.g. \cite{akt, lt, st}) as an invariant counterpart of general masa-bimodules.
We characterise the containment of a masa-bimodule of Toeplitz type in any of the
classes we consider, and describe explicitly the lattice operations in topological terms.

\smallskip

\noindent {\bf Acknowledgements. }
IT was supported by NSF grants
CCF-2115071 and DMS-2154459.
This material is based upon work supported by the
Swedish Research Council under grant no.\ 2021-06594 while the
first and the third author were in residence at Institut Mittag-Leffler in Djursholm, Sweden, during the 2026 program on Operator Algebras and Quantum Information.

\section{Properties of \texorpdfstring{$\omega$}{ω}-topology}\label{s_lpd}

In this section we recall the basic definitions and properties of
what is known as the \emph{$\omega$-topology} on product measure spaces, following \cite{eks, stt, llt}.  The main new results are Theorem~\ref{th_oc}, which describes how a natural regularisation operation interacts with finite unions of $\omega$-closed sets, and Corollary~\ref{c_unions}, which establishes a finite intersection property for $\omega$-dense sets reminiscent of the Baire Category Theorem.

We begin by recalling some relevant background from \cite{eks}.
Let $(X,\mu)$ be a standard $\sigma$-finite measure space.
We equip $X\times X$ with the product measure $\mu\times\mu$.
A measurable subset $E\subseteq X\times X$ is said to be \emph{marginally null}
if there exists a null set $M\subseteq X$ such that
$E\subseteq (M\times X) \cup (X\times M)$.
If $\kappa_1$ and $\kappa_2$ are measurable subsets of $X\times X$,
we say that $\kappa_1$ is \emph{marginally contained in} $\kappa_2$
(and write $\kappa_1\oincl \kappa_2$) if $\kappa_1\setminus \kappa_2$ is marginally null.
We call the relation $\oincl$ \emph{$\omega$-inclusion}.
We say that $\kappa_1$ and $\kappa_2$ are \emph{marginally equivalent} (and write $\kappa_1\cong \kappa_2$)
if $\kappa_1\oincl\kappa_2$ and $\kappa_2\oincl\kappa_1$
(or, equivalently, if the symmetric difference of $\kappa_1$ and $\kappa_2$ is marginally null).

A subset of $X\times X$ of the form $\alpha\times\beta$ (resp.~$\alpha\times\alpha$),
where $\alpha\subseteq X$ and $\beta\subseteq X$ are measurable sets,
will be called a
\emph{rectangle} (resp. \emph{square}) in $X\times X$.
A rectangle or a square is said to be \emph{non-trivial} if it is not marginally null.
A subset $\Omega\subseteq X\times X$
is called \emph{$\omega$-open} (resp. \emph{square~$\omega$-open})
if there exists a countable family
$\{R_i\}_{i\in \bb{N}}$ of rectangles in $X\times X$ (resp. squares in $X\times X$)
such that $\Omega\cong \bigcup_{i\in \bb{N}} R_i$.
A subset $\kappa\subseteq X\times X$ is called \emph{$\omega$-closed} if its complement
$\kappa^c$ is $\omega$-open. Clearly, the class of $\omega$-open (resp.~$\omega$-closed) sets is closed under
the formation of countable unions and finite intersections (resp.~countable intersections and finite unions).

It was shown in \cite{stt} that if $\E$ is an arbitrary family of $\omega$-open subsets of $X\times X$
then there exists a smallest, up to marginal equivalence, $\omega$-open set
$\Omega=\bigcup_{\omega}\E$, called the \emph{$\omega$-union} of $\E$, such that
$$E\in \E \ \Longrightarrow \ E\oincl \Omega.$$
Moreover, $\Omega$ is marginally equivalent to the union of countably many elements of~$\E$.
Given a measurable set $\kappa$, let
\[\ointer(\kappa) :=
\cup_{\omega}\{R : R \, \text{ is a rectangle with } R \subseteq_{\omega} \kappa\}\]
be its \emph{$\omega$-interior}, and
$\ocl(\kappa) := \ointer(\kappa^c)^c$ be its \emph{$\omega$-closure}; as we point out below, this operation satisfies the standard Kuratowski closure axioms with respect to marginal equivalence and marginal inclusions.
A measurable set~$\kappa\subseteq X\times X$ is said to be \emph{$\omega$-dense}
if~$\ocl(\kappa)=X\times X$.
Similarly, we let
\[\osq(\kappa) :=
\cup_{\omega}\{R : R \, \text{ is a square with } R \subseteq_{\omega} \kappa\}\]
be the \emph{square $\omega$-interior} of $\kappa$.

\begin{lemma}\label{lem_ointer}
  Let~$\kappa,\kappa_1,\kappa_2\subseteq X\times X$ be measurable
  sets. Then $\ointer(\kappa)$ is the largest, up to marginal
  equivalence, $\omega$-open set marginally contained in~$\kappa$.
  Hence, $\kappa$ is $\omega$-open if and only if $\kappa\cong \ointer(\kappa)$.
  Moreover,
  \begin{enumerate}
    \item $\ointer(X\times X)\cong X\times X$;
    \item $\ointer(\ointer(\kappa))\cong\ointer(\kappa)$;
    \item $\kappa_1\subseteq_\omega \kappa_2\implies \ointer(\kappa_1)\subseteq_\omega \ointer(\kappa_2)$, and
    \item $\ointer(\kappa_1\cap \kappa_2)\cong\ointer(\kappa_1)\cap
      \ointer(\kappa_2)$.
  \end{enumerate}
The same assertions hold, mutatis mutandis, for the square $\omega$-interior operation $\osq$ in place of $\ointer$.
\end{lemma}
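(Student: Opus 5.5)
The plan is to derive everything from two facts about the $\omega$-union recalled above from \cite{stt}. First, $\bigcup_\omega\E$ is the smallest $\omega$-open set marginally containing every member of $\E$. Second, it is marginally equivalent to a countable union of members of $\E$. Alongside these, the elementary observation that a countable union of marginally null sets is marginally null will be used repeatedly: take the union of the countably many null sets $M_i\subseteq X$.

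\emph{Main assertion.} Let $\E_\kappa$ be the family of rectangles $R$ with $R\oincl\kappa$. By the countability statement, $\ointer(\kappa)\cong\bigcup_i R_i$ with $R_i\in\E_\kappa$. Then $\bigcup_i R_i\setminus\kappa=\bigcup_i(R_i\setminus\kappa)$ is a countable union of marginally null sets, hence marginally null, so $\ointer(\kappa)\oincl\kappa$; and $\ointer(\kappa)$ is $\omega$-open by definition. Conversely, let $\Omega$ be $\omega$-open with $\Omega\oincl\kappa$, say $\Omega\cong\bigcup_j S_j$ with rectangles $S_j$. Each $S_j\oincl\Omega\oincl\kappa$, using transitivity of $\oincl$, which holds because $\kappa_1\setminus\kappa_3\subseteq(\kappa_1\setminus\kappa_2)\cup(\kappa_2\setminus\kappa_3)$. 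So $S_j\in\E_\kappa$, hence $S_j\oincl\ointer(\kappa)$, and therefore $\Omega\oincl\ointer(\kappa)$. This establishes the maximality statement. The equivalence ``$\kappa$ is $\omega$-open iff $\kappa\cong\ointer(\kappa)$'' follows directly: one direction applies maximality to $\Omega=\kappa$, and the other holds because $\ointer(\kappa)$ is $\omega$-open and $\omega$-openness is invariant under $\cong$.

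\emph{Items (i)--(iv).} (i) holds because $X\times X$ is itself a rectangle. (ii) follows from the characterisation just proved, since $\ointer(\kappa)$ is $\omega$-open. For (iii), $\ointer(\kappa_1)\oincl\kappa_1\oincl\kappa_2$ and $\ointer(\kappa_1)$ is $\omega$-open, so maximality gives $\ointer(\kappa_1)\oincl\ointer(\kappa_2)$. For (iv), one inclusion comes from (iii) applied to $\kappa_1\cap\kappa_2\subseteq\kappa_i$. For the other, $\ointer(\kappa_1)\cap\ointer(\kappa_2)$ is $\omega$-open, being a finite intersection of $\omega$-open sets; this closure property is stated in the text and amounts to distributing intersections over countable unions, using that $R\cap R'$ is a rectangle. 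It is also marginally contained in $\kappa_1\cap\kappa_2$, since $(A\cap B)\setminus(\kappa_1\cap\kappa_2)\subseteq(A\setminus\kappa_1)\cup(B\setminus\kappa_2)$. Maximality then gives the reverse inclusion.

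\emph{Squares.} For $\osq$ I will run the identical argument with ``rectangle'' replaced by ``square'' and ``$\omega$-open'' by ``square $\omega$-open''. The needed inputs carry over. $X\times X$ is a square. The intersection of two squares is a square: $(\alpha\times\alpha)\cap(\beta\times\beta)=(\alpha\cap\beta)\times(\alpha\cap\beta)$. Finally, the $\omega$-union of a family of square $\omega$-open sets is square $\omega$-open, because it is marginally equivalent to a countable union of members of the family.

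The only step needing real care is ensuring that $\omega$-unions are used only through the countability statement, since marginal null sets can only be combined countably. I do not expect any genuine obstacle beyond this.
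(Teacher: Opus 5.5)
Your proposal is correct and essentially matches the paper's proof. The paper calls the maximality statement and (i)--(iii) straightforward, and you have filled those in correctly via the countability of $\omega$-unions. Your argument for (iv) is the paper's: $\ointer(\kappa_1)\cap\ointer(\kappa_2)$ is $\omega$-open and marginally contained in $\kappa_1\cap\kappa_2$, and the reverse inclusion follows by monotonicity. For $\osq$, the paper singles out the same point you do: intersections of squares are squares, so $\osq(\kappa_1)\cap\osq(\kappa_2)$ is square $\omega$-open.
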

\begin{proof}
  The first assertion and parts (i)--(iii) are straightforward. We show (iv).
Clearly, the set $\Omega := \ointer(\kappa_1)\cap \ointer(\kappa_2)$ is $\omega$-open
  and marginally contained in $\kappa_1\cap \kappa_2$. Thus,
  $\Omega = \ointer(\Omega)\subseteq_\omega \ointer(\kappa_1\cap
  \kappa_2)$. On the other hand, if~$R$ is a rectangle with
  $R\subseteq \kappa_1\cap \kappa_2$, then $R\subseteq \kappa_1$ and
  $R\subseteq \kappa_2$, so $R\subseteq_\omega \Omega$. Hence
  $\ointer(\kappa_1\cap \kappa_2)\subseteq_\omega \Omega$ and (iv) follows.

Arguing with squares in place of rectangles, we obtain the corresponding properties of $\osq$;
the only point that needs clarification is that the set
$\Omega' := \osq(\kappa_1)\cap \osq(\kappa_2)$ is square $\omega$-open.
To this end, notice that if, up to marginal equivalence,
$\osq(\kappa_1) = \cup_{i\in \bb{N}} \alpha_i\times\alpha_i$ and
$\osq(\kappa_2) = \cup_{j\in \bb{N}} \beta_j\times\beta_j$ then, up to marginal
equivalence,
\[\Omega' = \cup_{i,j\in \bb{N}} (\alpha_i\cap\beta_j)\times (\alpha_i\cap\beta_j).\qedhere\]
\end{proof}

Taking complements, Lemma \ref{lem_ointer} implies the following
version of Kuratowski's closure axioms for $\omega$-topology.

\begin{corollary}\label{lem_oc}
  Let~$\kappa,\kappa_1,\kappa_2\subseteq X\times X$ be measurable
  sets. Then $\ocl(\kappa)$ is the smallest, up to marginal
  equivalence, $\omega$-closed set marginally containing
  $\kappa$. Hence, $\kappa$ is $\omega$-closed if and only if $\kappa\cong \ocl(\kappa)$.
  Moreover,
  \begin{enumerate}
  \item $\ocl(\emptyset) \cong \emptyset$;
  \item $\ocl(\ocl(\kappa)) \cong \ocl(\kappa)$;
  \item $\kappa_1\subseteq_\omega \kappa_2\implies \ocl(\kappa_1)\subseteq_\omega\ocl(\kappa_2)$, and
  \item $\ocl(\kappa_1\cup \kappa_2) \cong \ocl(\kappa_1) \cup
    \ocl(\kappa_2)$.
  \end{enumerate}
\end{corollary}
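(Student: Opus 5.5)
The plan is to derive everything from Lemma~\ref{lem_ointer} by complementation, using the definition $\ocl(\kappa)=\ointer(\kappa^c)^c$. First I will record two elementary facts about marginal relations under complements. Since $\kappa_1\setminus\kappa_2=\kappa_2^c\setminus\kappa_1^c$, we have
\[\kappa_1\oincl\kappa_2\iff \kappa_2^c\oincl\kappa_1^c,\]
and consequently $\kappa_1\cong\kappa_2\iff\kappa_1^c\cong\kappa_2^c$. By definition, a set is $\omega$-closed exactly when its complement is $\omega$-open. Every step below is a translation of the corresponding statement in Lemma~\ref{lem_ointer} through these facts.

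For the first assertion, note that $\ocl(\kappa)$ is $\omega$-closed, because its complement $\ointer(\kappa^c)$ is $\omega$-open. Next, $\ointer(\kappa^c)\oincl\kappa^c$ gives $\kappa\oincl\ocl(\kappa)$. Now let $F$ be any $\omega$-closed set with $\kappa\oincl F$. Then $F^c$ is $\omega$-open and $F^c\oincl\kappa^c$. By the maximality in Lemma~\ref{lem_ointer}, $F^c\oincl\ointer(\kappa^c)$, and complementing gives $\ocl(\kappa)\oincl F$. The characterisation of $\omega$-closed sets follows immediately: $\kappa$ is $\omega$-closed iff $\kappa^c\cong\ointer(\kappa^c)$ iff $\kappa\cong\ocl(\kappa)$.

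Parts (i)--(iv) are then checked one by one.
\begin{enumerate}
\item This follows from $\ocl(\emptyset)=\ointer(X\times X)^c\cong\emptyset$.
\item We have $\ocl(\ocl(\kappa))=\ointer(\ocl(\kappa)^c)^c=\ointer(\ointer(\kappa^c))^c\cong\ointer(\kappa^c)^c$, using Lemma~\ref{lem_ointer}(ii) and the fact that complements preserve $\cong$.
\item Here $\kappa_1\oincl\kappa_2$ gives $\kappa_2^c\oincl\kappa_1^c$, so $\ointer(\kappa_2^c)\oincl\ointer(\kappa_1^c)$, and complementing yields the claim.
\item By De Morgan, $(\kappa_1\cup\kappa_2)^c=\kappa_1^c\cap\kappa_2^c$, so Lemma~\ref{lem_ointer}(iv) gives $\ointer(\kappa_1^c\cap\kappa_2^c)\cong\ointer(\kappa_1^c)\cap\ointer(\kappa_2^c)$. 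Taking complements again turns the intersection into $\ocl(\kappa_1)\cup\ocl(\kappa_2)$.
\end{enumerate}

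There is no real obstacle in this argument. The only point needing care is that $\cong$ and $\oincl$ behave correctly under complementation, which is the two-line observation made at the start. In particular, the marginally null exceptional sets are simply carried along: the symmetric difference of two sets equals the symmetric difference of their complements.
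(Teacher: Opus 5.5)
Your proposal is correct and follows the paper's own route: the paper derives this corollary from Lemma~\ref{lem_ointer} simply by taking complements. You have written out that translation in full, including the needed observation that complementation reverses $\oincl$ and preserves $\cong$.
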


We consider two regularisation operations, $\oclint$ and $\oclsq$, defined for a measurable subset $\kappa\subseteq X\times X$ as follows:
\[\oclint(\kappa) = \ocl(\ointer(\kappa)) \ \mbox{ and } \  \oclsq(\kappa)=\ocl(\osq(\kappa)).\]
The preceding considerations immediately imply that these operations both preserve marginal inclusions.
The next definition was given in \cite{llt}.

\begin{definition}\label{d_genrs}
A measurable subset~$\kappa\subseteq X\times X$ is said to be:
\begin{enumerate}
\item\emph{generated by rectangles} if $\kappa\cong\oclint(\kappa)$;
\item\emph{generated by squares} if $\kappa\cong\oclsq(\kappa)$.
\end{enumerate}
\end{definition}

\begin{remark}\label{r_gsq}
A measurable subset~$\kappa\subseteq X\times X$ is generated by rectangles (resp. squares) if and only if
there exists an $\omega$-open set (resp. square $\omega$-open set) $\Omega$ such that $\kappa \cong \ocl(\Omega)$. In particular, $\oclint(\kappa)$ (resp. $\oclsq(\kappa)$) is generated by rectangles (resp. squares).
\end{remark}

\begin{proof}
We give the argument for squares; the argument for rectangles is identical.
If $\Omega$ is a square $\omega$-open set such that $\kappa \cong \ocl(\Omega)$,
then we have \[\osq(\Omega)\cong\Omega \oincl \ocl(\Omega) \cong \kappa.\]
Since $\ocl$ and $\osq$ both preserve marginal inclusions (see Lemma \ref{lem_ointer}),
it follows that
\[ \kappa \cong \ocl(\Omega) \cong \oclsq(\Omega) \subseteq_\omega \oclsq(\kappa) \subseteq_\omega \ocl(\kappa) \cong \kappa. \]
Thus $\kappa \cong \oclsq(\kappa)$, meaning $\kappa$ is generated by squares.

Conversely, if $\kappa$ is generated by squares, then $\kappa \cong\ocl(\Omega)$ where $\Omega=\osq(\kappa)$, which is square $\omega$-open.
\end{proof}

\begin{theorem}\label{th_oc}
Let $\kappa_1$ and $\kappa_2$ be $\omega$-closed subsets of $X\times X$. Then
\begin{equation}\label{eq_3cl}
\oclint(\kappa_1\cup \kappa_2) \cong \ocl(\ointer(\kappa_1) \cup \ointer(\kappa_2)) \cong
\oclint(\kappa_1) \cup \oclint(\kappa_2).
\end{equation}
\end{theorem}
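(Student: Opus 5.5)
The second marginal equivalence in \eqref{eq_3cl} is immediate from Corollary~\ref{lem_oc}(iv) applied to the sets $\ointer(\kappa_1)$ and $\ointer(\kappa_2)$. So the real content is the first equivalence. One inclusion is easy: $\ointer(\kappa_i)\oincl \kappa_1\cup\kappa_2$ and $\ointer(\kappa_i)$ is $\omega$-open. Hence $\ointer(\kappa_1)\cup\ointer(\kappa_2)\oincl \ointer(\kappa_1\cup\kappa_2)$ by Lemma~\ref{lem_ointer}, and Corollary~\ref{lem_oc}(iii) gives $\ocl(\ointer(\kappa_1)\cup\ointer(\kappa_2))\oincl\oclint(\kappa_1\cup\kappa_2)$.

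For the reverse inclusion, set $C:=\ocl(\ointer(\kappa_1)\cup\ointer(\kappa_2))$, which is $\omega$-closed. Since $\ocl$ of a set is the smallest $\omega$-closed set containing it, it suffices to show $\Omega:=\ointer(\kappa_1\cup\kappa_2)\oincl C$, that is, that $\Omega\setminus C=\Omega\cap C^c$ is marginally null. This set is $\omega$-open, being a finite intersection of $\omega$-open sets. Suppose it is not marginally null. Writing it, up to marginal equivalence, as a countable union of rectangles, one of these rectangles $R$ must be non-trivial. This $R$ satisfies three conditions:
\begin{itemize}
\item $R\oincl \kappa_1\cup\kappa_2$;
\item $R\cap \ointer(\kappa_1)$ is marginally null;
\item $R\cap \ointer(\kappa_2)$ is marginally null.
\end{itemize}
The last two hold because $\ointer(\kappa_1)\cup\ointer(\kappa_2)\subseteq C$ and $R\oincl C^c$.

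Now I use that $\kappa_1$ is $\omega$-closed. The set $R\setminus\kappa_1=R\cap\kappa_1^c$ is $\omega$-open, and it is marginally contained in $\kappa_2$ because $R\oincl\kappa_1\cup\kappa_2$. Hence $R\setminus\kappa_1\oincl\ointer(\kappa_2)$ by Lemma~\ref{lem_ointer}. Since $R\cap\ointer(\kappa_2)$ is marginally null, $R\setminus \kappa_1$ is marginally null, i.e.\ $R\oincl\kappa_1$. As $R$ is a rectangle, this gives $R\oincl\ointer(\kappa_1)$. Combined with the fact that $R\cap\ointer(\kappa_1)$ is marginally null, $R$ itself is marginally null, which is a contradiction. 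Thus $\Omega\oincl C$, and applying $\ocl$ gives $\oclint(\kappa_1\cup\kappa_2)\oincl C$.

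The main point needing care is the step producing a non-trivial rectangle inside a non-marginally-null $\omega$-open set. This holds because a countable union of marginally null sets is marginally null: take the union of the countably many null sets involved. The other delicate point is using the $\omega$-closedness of $\kappa_1$ to make $R\setminus\kappa_1$ $\omega$-open, so that it lands in $\ointer(\kappa_2)$. Only one of the two sets needs to be $\omega$-closed for this argument.
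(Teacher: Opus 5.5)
Your proof is correct and follows the paper's skeleton: the same easy inclusion, and the same contradiction via a non-trivial rectangle $R\oincl \ointer(\kappa_1\cup\kappa_2)\setminus\ocl(\ointer(\kappa_1)\cup\ointer(\kappa_2))$ whose intersections with $\ointer(\kappa_1)$ and $\ointer(\kappa_2)$ are marginally null. The paper finishes through a separate Claim: a non-trivial rectangle covered by two $\omega$-closed sets has a piece with non-empty $\omega$-interior, proved by an $\omega$-density argument. You instead finish directly by noting that $R\setminus\kappa_1$ is $\omega$-open and marginally contained in $\kappa_2$, hence in $\ointer(\kappa_2)$; this is slightly shorter and, as you observe, shows that only $\kappa_1$ needs to be $\omega$-closed.
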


\begin{proof}
The second equivalence in (\ref{eq_3cl}) follows from Corollary \ref{lem_oc}\,(iv).
For the first equivalence, let $\Omega_i = \ointer(\kappa_i)$, $i = 1,2$. Write $\Omega=\Omega_1\cup \Omega_2$ and $\kappa=\kappa_1\cup \kappa_2$; we wish to show that $\ocl(\Omega)\cong \oclint(\kappa)$.
Since $\Omega \oincl \kappa$ and $\Omega$ is an $\omega$-open set,
we have $\Omega \oincl \ointer(\kappa)$; hence, $\ocl(\Omega) \oincl \oclint(\kappa)$.

To prove the reverse inclusion, it suffices to show the following

\medskip

\noindent {\it Claim. } If $\lambda_1$ and $\lambda_2$ are $\omega$-closed subsets
of a non-trivial rectangle $R\subseteq X\times X$ such that
$\lambda_1\cup\lambda_2 = R$, then at least one of
$\lambda_1$ and $\lambda_2$ has a non-empty $\omega$-interior.

\medskip

Indeed, suppose that the Claim is true, and assume,
towards a contradiction, that $\ointer(\kappa)\not\oincl\ocl(\Omega)$, i.e., that
the set $U := \ointer(\kappa) \setminus \ocl(\Omega)$
is not marginally null.
Since $U$ is $\omega$-open, there exists a non-trivial rectangle $R \subseteq_\omega U$.
For $i=1,2$, we have $R\cap\Omega_i\oincl R\cap \ocl(\Omega)\cong\emptyset$, so $R\cap\Omega_i\cong\emptyset$.
On the other hand, $R\subseteq \kappa$. Letting $\lambda_i = R\cap \kappa_i$, $i = 1,2$,
we have that $R = \lambda_1\cup \lambda_2$ and, by the Claim, we may assume, without
loss of generality, that there exists a rectangle $R'$ with $\emptyset\not\cong R'\subseteq_{\omega} R$, such that $R'\subseteq \lambda_1$.
It follows that $R'\subseteq_{\omega} \Omega_1$, contradicting the fact that
$R'\cap\Omega_1\cong\emptyset$.
We conclude that $\ointer(\kappa)\oincl\ocl(\Omega)$, which implies that $\oclint(\kappa)\oincl\ocl(\Omega)$.

In order to prove the Claim, consider the $\omega$-open sets $W_i = R\setminus \lambda_i$,
$i = 1,2$, and note that the assumption of the Claim reads
$W_1\cap W_2 \cong\emptyset$. Suppose,
towards a contradiction, that $W_1$ and $W_2$ are both $\omega$-dense in $R$, i.e., that $\ocl(W_i)=R$, $i=1,2$.
Then $W_1$ is not marginally null and $\omega$-open, so there exists a non-marginally null
rectangle $S$ with $S\subseteq_{\omega} W_1$. Now $W_1\cap W_2\cong\emptyset$ implies that $S\cap W_2 \cong\emptyset$, so $W_2\oincl S^c$. Since $S^c$ is $\omega$-closed, this means that $\ocl (W_2) \oincl S^c$,
a contradiction with the fact that $W_2$ is $\omega$-dense in $R$.
\end{proof}

\begin{corollary}\label{c_gru}
  If $\kappa_1$ and $\kappa_2$ are subsets of $X\times X$ that are generated by rectangles, then
  $\kappa_1\cup \kappa_2$ is generated by rectangles.
\end{corollary}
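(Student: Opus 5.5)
The plan is to deduce the corollary directly from Theorem~\ref{th_oc}. The main point is to check that sets generated by rectangles are $\omega$-closed, so that the theorem applies to them.

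First, suppose $\kappa_i$ is generated by rectangles, so $\kappa_i\cong\oclint(\kappa_i)=\ocl(\ointer(\kappa_i))$. By Corollary~\ref{lem_oc}, $\ocl$ of any measurable set is $\omega$-closed. Hence $\kappa_i$ is marginally equivalent to an $\omega$-closed set, and therefore $\kappa_i$ is itself $\omega$-closed. Indeed, its complement is marginally equivalent to an $\omega$-open set, and $\omega$-openness is defined only up to marginal equivalence. Alternatively, by the characterisation in Corollary~\ref{lem_oc}, $\kappa_i\cong\ocl(\kappa_i)$: we have $\ocl(\kappa_i)\cong\ocl(\ocl(\ointer(\kappa_i)))\cong\ocl(\ointer(\kappa_i))\cong\kappa_i$, using the monotonicity in Corollary~\ref{lem_oc}(iii) to pass $\ocl$ across the marginal equivalence.

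Next, since $\kappa_1$ and $\kappa_2$ are $\omega$-closed, Theorem~\ref{th_oc} gives
\[\oclint(\kappa_1\cup\kappa_2)\cong\oclint(\kappa_1)\cup\oclint(\kappa_2)\cong\kappa_1\cup\kappa_2.\]
The last equivalence uses the hypothesis on each $\kappa_i$, together with the fact that a union of pairs of marginally equivalent sets is marginally equivalent: the symmetric difference of the unions is contained in the union of the symmetric differences, and a finite union of marginally null sets is marginally null. This is exactly the statement that $\kappa_1\cup\kappa_2$ is generated by rectangles.

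The only step needing care is the first one, namely that marginal equivalence preserves $\omega$-closedness and that $\ocl$ respects marginal equivalence. Both follow from the definitions together with Lemma~\ref{lem_ointer} and Corollary~\ref{lem_oc}, so I do not expect a genuine obstacle.
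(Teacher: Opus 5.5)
Your proposal is correct and is the paper's argument: the paper's proof is the one-line application of Theorem~\ref{th_oc}, giving $\kappa_1\cup\kappa_2\cong\oclint(\kappa_1)\cup\oclint(\kappa_2)\cong\oclint(\kappa_1\cup\kappa_2)$. You also check that sets generated by rectangles are $\omega$-closed, so that the theorem applies; the paper leaves this implicit, and your justification of it is sound.
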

\begin{proof}
By Theorem \ref{th_oc},
\begin{equation*}
\kappa_1\cup \kappa_2
 \cong
  \oclint(\kappa_1) \cup \oclint(\kappa_2)
  \cong
\oclint(\kappa_1\cup \kappa_2).\qedhere
\end{equation*}
\end{proof}

Theorem \ref{th_oc} has the following consequence.

\begin{corollary}\label{c_unions}
The intersection of a finite family of $\omega$-open and $\omega$-dense subsets of $X\times X$ is $\omega$-dense.
\end{corollary}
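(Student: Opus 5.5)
By induction on the number of sets, it suffices to treat two sets. So let $W_1,W_2$ be $\omega$-open and $\omega$-dense, i.e. $\ocl(W_i)\cong X\times X$. The plan is to take complements and apply Theorem~\ref{th_oc}.

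Set $\kappa_i = W_i^c$. Each $\kappa_i$ is $\omega$-closed, since $W_i$ is $\omega$-open. The $\omega$-density of $W_i$ translates into a statement about interiors:
\[
\ointer(\kappa_i) = \ointer(W_i^c) = \ocl(W_i)^c \cong \emptyset .
\]
Hence $\oclint(\kappa_i)\cong\ocl(\emptyset)\cong\emptyset$ by Corollary~\ref{lem_oc}(i).

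Theorem~\ref{th_oc} applies to the $\omega$-closed sets $\kappa_1,\kappa_2$ and gives
\[
\oclint(\kappa_1\cup\kappa_2)\cong\oclint(\kappa_1)\cup\oclint(\kappa_2)\cong\emptyset .
\]
Since $\ointer(\kappa_1\cup\kappa_2)\oincl\ocl(\ointer(\kappa_1\cup\kappa_2))\cong\emptyset$, the $\omega$-interior of $\kappa_1\cup\kappa_2=(W_1\cap W_2)^c$ is marginally null. Taking complements once more,
\[
\ocl(W_1\cap W_2)=\ointer\bigl((W_1\cap W_2)^c\bigr)^c\cong X\times X,
\]
which says exactly that $W_1\cap W_2$ is $\omega$-dense.

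For the induction step, note that $W_1\cap W_2$ is again $\omega$-open, because $\omega$-open sets are closed under finite intersections. So the two-set case can be applied repeatedly to cover any finite family.

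No step here is a real obstacle; the substance lies in Theorem~\ref{th_oc}, specifically its Claim. The only care needed is the bookkeeping between $\ointer$, $\ocl$ and complements up to marginal equivalence, in particular the identity $\ointer(\kappa^c)=\ocl(\kappa)^c$, which holds by definition of $\ocl$.
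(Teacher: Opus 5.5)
Your proof is correct and is essentially the paper's argument: pass to the $\omega$-closed complements, whose $\omega$-interiors are marginally null, apply Theorem~\ref{th_oc} to get $\oclint(W_1^c\cup W_2^c)\cong\emptyset$, and take complements again. You also note explicitly that $W_1\cap W_2$ is again $\omega$-open, which is what justifies reducing to two sets; the paper leaves this implicit.
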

\begin{proof}
It suffices to prove the statement for two $\omega$-open and $\omega$-dense sets $\Omega_1$ and $\Omega_2$.
Since $\ointer(\Omega_i^c) \cong \emptyset$, $i = 1,2$,
Theorem~\ref{th_oc} yields $\oclint(\Omega_1^c \cup\Omega_2^c) \cong \emptyset$, so we have $\ointer(\Omega_1^c\cup \Omega_2^c)\cong \emptyset$,
and hence $\ocl(\Omega_1\cap \Omega_2) \cong X\times X$.
\end{proof}

\begin{remark*}
Corollary \ref{c_unions} can be considered as a weak version of
the Baire Category Theorem for $\omega$-topology.
We do not know if its statement remains true for countably many $\omega$-open and $\omega$-dense subsets of $X\times X$.
\end{remark*}

\section{Reflexivity and positivity domains}\label{s_rposdom}

In this section, we set up the measure-theoretic and operator-algebraic framework for the remainder of the paper. After fixing standard notation and recalling the correspondence between $\omega$-closed sets and reflexive masa-bimodules, we refine a fundamental reflexivity theorem of Erdos, Katavolos, and Shulman \cite{eks}. We then adapt these spatial concepts to the setting of operator systems, introducing the notion of a positive-full subspace, and provide an analogous spatial characterisation for this strengthened form of reflexivity.

Let $(X,\mu)$ be a standard measure space,
and set $H = L^2(X,\mu)$.
We denote by $\B(H)$ the space of all bounded linear operators on $H$.
We refer to the weak operator topology on $\cl B(H)$ as the WOT.
The norm closure of a subset $S$ of a normed vector space, such as $H$ or $\cl B(H)$, will be written as $\overline{S}$,
whereas the closure of $S\subseteq \B(H)$ in the WOT will be denoted by $\overline{S}^w$.
For $\xi,\eta\in H$, let $\eta\xi^*$ be the (rank one) operator given by $(\eta\xi^*)(\zeta) = \langle \zeta,\xi\rangle\eta$, and
write
$$\R(H) = \{\eta\xi^* : 0\ne \xi,\eta\in H\}.$$
We let $\B(H)^+$ be the cone of all positive operators in $\B(H)$, and
set $\R(H)^+ = \B(H)^+ \cap \R(H)$.
We denote by $[\E]$ the linear span of a subset $\E$ of a linear space.
For $a\in L^{\infty}(X,\mu)$, we let $M_a\in \B(H)$ be the multiplication operator given by $M_a\xi = a\xi$, $\xi \in H$,
and write
$$\D = \{M_a : a\in L^{\infty}(X,\mu)\}.$$
Note that $\D$ is a maximal abelian selfadjoint algebra
(masa, for short) in $\B(H)$. For a measurable subset $\alpha\subseteq X$, let $P(\alpha)$ be the
operator of multiplication by the characteristic function of $\alpha$.
Given $\U\subseteq \B(H)$, let $[\U]_{\D}$ be the linear span of
$\D\U\D:=\{D_1TD_2:D_1,D_2\in \D,\,T\in \U\}$.
We call $\U$ a \emph{$\D$-bimodule}, or just a \emph{masa-bimodule}, if $[\U]_{\D} = \U$.

Let $\kappa\subseteq X\times X$ be a measurable subset.
An operator $T\in \B(H)$ is said to be \emph{supported by} $\kappa$ \cite{a,eks} if
$$\left(\alpha\times\beta\right) \cap \kappa \cong \emptyset \implies P(\beta)TP(\alpha) = 0,$$
for all measurable subsets $\alpha, \beta\subseteq X$. Set
$$\S(\kappa) = \{T\in \B(H) : T \text{ is supported by } \kappa\}.$$
It is straightforward to check that $\S(\kappa)$ is a WOT-closed masa-bimodule.

The masa-bimodules of the form $\S(\kappa)$ are closely related to the concept of reflexivity.
Recall that, given a subset $\U\subseteq \B(H)$, its \emph{reflexive cover}
\cite{ls} is the subspace
$$\RefHull(\U) = \{T\in \B(H) : T\xi\in \overline{[\U \xi]}, \text{ for every } \xi\in H\}.$$
It is clear that $\U\subseteq \RefHull(\U) = \RefHull([\U])$, and that $\RefHull(\U)$ is closed in the WOT. Moreover, $\RefHull(\U)$ is a masa-bimodule whenever $\U$ is a masa-bimodule.
A subspace $\U\subseteq \B(H)$ is called \emph{reflexive} if $\U = \RefHull(\U)$,
and it is called \emph{strongly reflexive} if there exists a set $\R\subseteq \R(H)$
such that $\U = \RefHull(\R)$.

For an arbitrary subset $\U \subseteq \B(H)$, its \emph{$\omega$-support}, denoted $\osupp(\U)$, is defined as the unique (up to marginal equivalence) $\omega$-closed set $\kappa$ such that $\RefHull([\U]_\D) = \S(\kappa)$ \cite{eks}. It is straightforward to see from this definition that $\osupp(\U)$ is also the minimal,
with respect to $\omega$-inclusion, $\omega$-closed set $\lambda$ such that $\U \subseteq \S(\lambda)$. Furthermore,
$\U$ is a reflexive masa-bimodule if and only if $\U = \S(\osupp(\U))$. Hence, every reflexive
masa-bimodule $\U$ is of the form $\U=\S(\kappa)$ for some $\omega$-closed subset $\kappa$ of $X\times X$, namely $\kappa=\osupp(\U)$, and $\U\mapsto \osupp(\U)$ is a bijective correspondence between the reflexive masa-bimodules and the $\omega$-closed subsets of $X\times X$, modulo marginal equivalence (see \cite{eks}).

We recall a fundamental characterisation of strong reflexivity, due to Erdos, Katavolos and Shulman \cite{eks}. We say that a WOT-closed subspace $\U\subseteq \B(H)$ is \emph{generated by its rank one operators} if $\U=\overline{[\R(H)\cap \U]}^w$.

\begin{theorem}[{\cite[Theorem~6.11]{eks}}]\label{th_sr}
Let $\kappa\subseteq X\times X$ be an $\omega$-closed set. The following are equivalent:
\begin{enumerate}
\item the masa-bimodule $\S(\kappa)$ is strongly reflexive;
\item $\kappa$ is generated by rectangles;
\item $\S(\kappa)$ is generated by its rank one operators.
\end{enumerate}
Hence, $\U\mapsto \osupp(\U)$ is a bijective correspondence between the strongly reflexive
masa-bimodules and the subsets of $X\times X$ which are generated by rectangles, modulo marginal equivalence.
\end{theorem}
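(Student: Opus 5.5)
The plan is to prove the cycle (iii)$\Rightarrow$(i)$\Rightarrow$(ii)$\Rightarrow$(iii), using the support calculus of Section~\ref{s_lpd}. Two basic facts about rank one operators will be used throughout.

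\emph{Fact A.} A rank one operator $\eta\xi^*$ is supported by the rectangle $\beta\times\alpha$, where $\alpha=\{\xi\neq 0\}$ and $\eta$ is supported on $\beta=\{\eta\neq0\}$ (with the paper's convention that $P(\beta)TP(\alpha)$ sees $\alpha\times\beta$, so one has to fix orientation carefully).

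\emph{Fact B.} Conversely, $\eta\xi^*\in\S(\kappa)$ exactly when that rectangle is marginally contained in $\kappa$. Since the rectangle is a product of sets, this reduces to the statement that a rectangle lies in the $\omega$-closed set $\kappa$ up to marginal null sets iff no sub-rectangle avoids it.

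It follows that $\osupp([\R(H)\cap\S(\kappa)]_\D)$ is the $\omega$-closure of the $\omega$-union of the rectangles of the nonzero rank one operators in $\S(\kappa)$. By Fact B these are exactly the rectangles $R\oincl\kappa$, so the set is $\ocl(\ointer(\kappa))=\oclint(\kappa)$.

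(iii)$\Rightarrow$(i): Take $\R=\R(H)\cap\S(\kappa)$. Since $\S(\kappa)$ is reflexive, $\RefHull(\R)\subseteq\S(\kappa)$. Reflexive covers are WOT-closed and contain the span, so $\S(\kappa)=\overline{[\R]}^w\subseteq\RefHull(\R)$.

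(i)$\Rightarrow$(ii): Suppose $\S(\kappa)=\RefHull(\R)$ with $\R$ rank one. Then $\RefHull(\R)=\RefHull([\R]_\D)$: the bimodule generated by $\R$ sits between $\R$ and the masa-bimodule $\RefHull(\R)$. Hence $\kappa\cong\osupp(\R)$. Each element of $\R$ lies in $\S(\kappa)$, so by Fact A its rectangle is $\oincl\kappa$, hence $\oincl\ointer(\kappa)$. So every $T\in\R$ lies in $\S(\oclint(\kappa))$, and by minimality of the $\omega$-support $\kappa\oincl\oclint(\kappa)\oincl\kappa$.

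(ii)$\Rightarrow$(iii): This is the main step. Let $\U=\overline{[\R(H)\cap\S(\kappa)]}^w$, a WOT-closed masa-bimodule contained in $\S(\kappa)$. By the computation above and (ii), $\osupp(\U)\cong\kappa$, hence $\RefHull(\U)=\S(\kappa)$. What is needed is that $\U$ is already reflexive, i.e.\ that WOT-closed bimodules generated by rank ones are reflexive. I would prove this via duality. Suppose $\omega=\sum\eta_n\xi_n^*$ is a trace-class (or, for WOT, finite-rank) functional annihilating $\U$. It annihilates $P(\beta)\eta\xi^*P(\alpha)$ for all rank ones in $\S(\kappa)$, so the function $h(x,y)=\sum\xi_n(x)\overline{\eta_n(y)}$ integrates to zero against $\chi_\beta\otimes\chi_\alpha\cdot$(products of functions) on each rectangle inside $\kappa$. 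Hence $h$ vanishes a.e.\ on $\ointer(\kappa)$. I would then use the EKS result that a function of this form vanishing on an $\omega$-open set vanishes marginally a.e.\ on its $\omega$-closure, which is $\kappa$. That gives $\omega\perp\S(\kappa)$.

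I expect this last point — passing from vanishing on $\ointer(\kappa)$ to annihilating $\S(\kappa)$ — to be the real obstacle. It is precisely the content of EKS's spectral synthesis/$\omega$-closure machinery. If no clean route appears, I would reduce to the WOT case with finite-rank functionals and quote the EKS result that $\S(\ocl(\Omega))$ is the WOT-closed span of the rank one operators supported in $\Omega$.
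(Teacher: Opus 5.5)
Your implications (iii)$\Rightarrow$(i) and (i)$\Rightarrow$(ii) are correct and elementary. Two small slips: with the paper's convention $\eta\xi^*$ is supported on $\operatorname{supp}(\xi)\times\operatorname{supp}(\eta)$, and in (i)$\Rightarrow$(ii) the fact you need is Fact~B, which uses that $\kappa$ is $\omega$-closed. The paper gives no proof of this theorem; it simply quotes \cite[Theorem~6.11]{eks}.

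The gap is in (ii)$\Rightarrow$(iii), where all the content lies. Your duality sketch correctly shows the following: if a functional with kernel $h$ annihilates the rank one operators in $\mathcal{S}(\kappa)$, then $h$ vanishes marginally almost everywhere on $\oclint(\kappa)\cong\kappa$. The next sentence, ``that gives $\omega\perp\mathcal{S}(\kappa)$'', does not follow. Passing from ``$h$ vanishes on $\kappa$'' to ``$h$ annihilates $\mathcal{S}(\kappa)$'' is an operator-synthesis statement, and it is false in the generality in which you state it. Nothing before that sentence uses finite rank. For trace-class $h$, which you explicitly allow, the same steps go through: such $h$ are still $\omega$-continuous, and an $\omega$-open set of product measure zero is marginally null. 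So if your last step were valid for those $h$, you would have proved that the rank one operators are weak* dense in every $\mathcal{S}(\kappa)$ with $\kappa$ generated by rectangles. That contradicts the result of \cite{eks}, recalled in the Introduction, that the rank one subspace of a strongly reflexive masa-bimodule is weakly, but not necessarily weak*, dense.

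What is actually needed is an argument that uses in an essential way that $h(x,y)=\sum_{n=1}^N\xi_n(x)\overline{\eta_n(y)}$ is a finite sum. It must show that such an $h$, vanishing marginally almost everywhere on $\kappa$, annihilates $\mathcal{S}(\kappa)$. This is the genuine content of \cite[Theorem~6.11]{eks}, and your proposal does not supply it. Your fallback is to quote from \cite{eks} that $\mathcal{S}(\ocl(\Omega))$ is the WOT-closed span of the rank one operators supported in $\Omega$. That is precisely the implication (ii)$\Rightarrow$(iii), so at the one non-routine step the argument becomes circular. Likewise, reformulating the goal as ``WOT-closed bimodules generated by rank ones are reflexive'' restates (ii)$\Rightarrow$(iii) rather than proving it.
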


For $\kappa\subseteq X\times X$, call the set $\kappa^t=\{(y,x):(x,y)\in \kappa\}$ the \emph{transpose} of $\kappa$. Clearly, $\ointer$, $\osq$ and $\ocl$ commute with the transpose.
Let $\Delta = \{(x,x) : x\in X\}$ be the \emph{diagonal} in $X\times X$.
A measurable subset~$\kappa\subseteq X\times X$ is said to be a \emph{positivity domain} \cite{llt} if
it is generated by rectangles, it is symmetric (that is, $\kappa\cong \kappa^t$), and
$\Delta\oincl\kappa$.
Recall that an \emph{operator system} is a subspace $\S\subseteq \B(H)$ such that
$I\in \S$ and $T^*\in \S$ for every $T\in \S$.
An operator system $\S\subseteq \B(H)$ that is also a $\D$-bimodule will be called
an \emph{operator $\D$-system}. The following fact was proved in \cite{llt}:

\begin{proposition}[{\cite[Proposition~3.3]{llt}}]\label{p_llt}
  If $\kappa\subseteq X\times X$ is generated by rectangles,  then
 $\S(\kappa)$ is an operator system if and only if $\kappa$ is a positivity domain.
\end{proposition}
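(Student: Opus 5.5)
We prove the two directions separately, using Theorem~\ref{th_sr} only to know that $\kappa$ is $\omega$-closed. Since $\kappa$ is generated by rectangles, $\kappa\cong\ocl(\ointer(\kappa))$, and in particular $\kappa$ is $\omega$-closed, so $\S(\kappa)$ is a reflexive masa-bimodule with $\osupp(\S(\kappa))\cong\kappa$. We use two elementary facts.
\begin{enumerate}
\item We have $I\in\S(\kappa)$ if and only if $\Delta\oincl\kappa$.
\item We have $\S(\kappa)^*=\S(\kappa^t)$.
\end{enumerate}
For (ii), note that $P(\beta)T^*P(\alpha)=(P(\alpha)TP(\beta))^*$ and that $(\alpha\times\beta)\cap\kappa^t\cong\emptyset$ if and only if $(\beta\times\alpha)\cap\kappa\cong\emptyset$.

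\emph{Sufficiency.} Suppose $\kappa$ is a positivity domain. Then $\S(\kappa)^*=\S(\kappa^t)=\S(\kappa)$, because $\S(\cdot)$ depends only on the marginal equivalence class. For $I$, let $(\alpha\times\beta)\cap\kappa\cong\emptyset$. Then $(\alpha\times\beta)\cap\Delta$ is marginally null, being marginally contained in $(\alpha\times\beta)\cap\kappa$. Since $(\alpha\times\beta)\cap\Delta=\{(x,x):x\in\alpha\cap\beta\}$, this forces $\alpha\cap\beta$ to be null: a subset of $\Delta$ contained in $(M\times X)\cup(X\times M)$ has first coordinates in $M$. Hence $P(\beta)IP(\alpha)=P(\alpha\cap\beta)=0$, so $I\in\S(\kappa)$, and $\S(\kappa)$ is an operator system.

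\emph{Necessity.} Suppose $\S(\kappa)$ is an operator system.
\begin{itemize}
\item \textbf{Symmetry.} Since $\S(\kappa)=\S(\kappa)^*=\S(\kappa^t)$ and $\kappa^t$ is $\omega$-closed (transpose commutes with $\ocl$), the bijectivity of $\U\mapsto\osupp(\U)$ onto $\omega$-closed sets modulo marginal equivalence gives $\kappa\cong\kappa^t$.
\item \textbf{The diagonal.} This is the main step: we must show $\Delta\oincl\kappa$ from $I\in\S(\kappa)$. Equivalently, $\Delta\cap\kappa^c$ is marginally null. Since $\kappa^c$ is $\omega$-open, $\kappa^c\cong\bigcup_i\alpha_i\times\beta_i$, and each rectangle satisfies $(\alpha_i\times\beta_i)\cap\kappa\cong\emptyset$. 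Support then gives $P(\alpha_i\cap\beta_i)=P(\beta_i)IP(\alpha_i)=0$, so $\alpha_i\cap\beta_i$ is null. Therefore $\Delta\cap\bigcup_i(\alpha_i\times\beta_i)\subseteq (M\times X)$ with $M=\bigcup_i(\alpha_i\cap\beta_i)$ null.
\end{itemize}

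The remaining care point is that marginal equivalence is compatible with intersecting with $\Delta$. If $\kappa^c\,\triangle\,\bigcup_i R_i\subseteq (N\times X)\cup(X\times N)$ with $N$ null, then $\Delta\cap\kappa^c\subseteq (M\cup N)\times X$, which is marginally null. Together with the hypothesis that $\kappa$ is generated by rectangles, this shows that $\kappa$ is a positivity domain.
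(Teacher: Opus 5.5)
Your proof is correct. The paper does not prove this statement; it quotes it from \cite[Proposition~3.3]{llt}, so there is no in-text argument to compare with. Your argument is a complete verification from the definition of $\S(\kappa)$ and the $\omega$-support correspondence of \cite{eks}.

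Two comments on what it actually uses. First, the rectangle hypothesis enters only in two places. It gives the $\omega$-closedness of $\kappa$, which is immediate from $\kappa\cong\ocl(\ointer(\kappa))$, so Theorem~\ref{th_sr} is not needed. It also appears as a clause in the definition of a positivity domain. So you have really shown that for every $\omega$-closed $\kappa$, $\S(\kappa)$ is an operator system if and only if $\kappa\cong\kappa^t$ and $\Delta\oincl\kappa$. The $\omega$-closedness is essential in your diagonal step: the forward half of your fact (i) fails for arbitrary measurable sets. For example, on $X=[0,1]$ one has $I\in\S(\Delta^c)$, although $\Delta\not\oincl\Delta^c$.

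Second, the only non-elementary input is the injectivity of $\kappa\mapsto\S(\kappa)$ on $\omega$-closed sets, which you use in the symmetry step. For arbitrary $\omega$-closed sets this is a genuine theorem of \cite{eks}, but for sets generated by rectangles it is elementary. Suppose $\ointer(\kappa)\cong\bigcup_i R_i$ with $R_i\oincl\kappa$, and $P(\beta)\S(\kappa)P(\alpha)=\{0\}$. Testing on the operators $\eta\xi^*$ with $\supp\xi\times\supp\eta\subseteq R_i$ gives $(\alpha\times\beta)\cap\ointer(\kappa)\cong\emptyset$, hence $\kappa\cong\ocl(\ointer(\kappa))\oincl(\alpha\times\beta)^c$. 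Apply this to the countably many rectangles whose union is marginally equivalent to $(\kappa^t)^c$, and then symmetrically with $\kappa$ and $\kappa^t$ exchanged. This yields $\kappa\cong\kappa^t$ directly from $\S(\kappa)=\S(\kappa^t)$, and makes your proof self-contained in the case at hand.
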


In the next definition, we introduce a version of reflexivity that is stronger than
strong reflexivity and useful in the context of operator systems.
It is motivated by the observation in \cite{llt} that there exist strongly reflexive operator systems that do not contain positive rank one operators, and the
link established therein between the abundance of positive rank one operators
in a masa-bimodule and the positive completion properties of its support.

\begin{definition}\label{d_psr}
A subspace $\U\subseteq \B(H)$ is called
\begin{enumerate}
\item \emph{positive-full} if
there exists a set $\R\subseteq \B(H)$ of positive rank one operators such that $\U = \RefHull(\R)$;
\item \emph{positive-null} if $\U$ does not contain a positive rank one operator.
\end{enumerate}
\end{definition}

Of course, any positive-full subspace is strongly reflexive.

\begin{remark*}
  Let $\kappa\subseteq X\times X$ be a positivity domain.
  It is straightforward to see that $\S(\kappa)$
  contains a positive rank one operator if and only if there exists
  a non-null measurable subset $\alpha\subseteq X$ such that $\alpha\times\alpha\oincl \kappa$.
  Thus, $\S(\kappa)$ is positive-null if and only if $\osq(\kappa)\cong \emptyset$.
  For a concrete construction of a positive-null operator $\D$-system over the circle group, we refer the reader to \cite[Corollary~5.3]{llt}.
\end{remark*}

In Theorem \ref{th_psr} below, we prove an operator system version of Theorem \ref{th_sr}.
We will need the following fact:

\begin{lemma}
  \label{lem:squares}
  If $\R\subseteq \R(H)^+$ then $\osupp(\R)$ is generated by squares.
\end{lemma}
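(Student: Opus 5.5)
To prove that $\kappa:=\osupp(\R)$ is generated by squares, I will exhibit a square $\omega$-open set $\Omega$ with $\kappa\cong\ocl(\Omega)$ and then invoke Remark~\ref{r_gsq}.

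\textbf{Supports of single operators.} For a positive rank one operator $T=\xi\xi^*$ with $\xi\neq 0$, let $\alpha_\xi=\{x\in X:\xi(x)\neq 0\}$, a measurable set defined up to a null set. I first check that $\osupp(\{T\})\cong\alpha_\xi\times\alpha_\xi$.

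For the inclusion $T\in\S(\alpha_\xi\times\alpha_\xi)$: if $(\alpha\times\beta)\cap(\alpha_\xi\times\alpha_\xi)\cong\emptyset$, then either $\alpha\cap\alpha_\xi$ or $\beta\cap\alpha_\xi$ is null. Since $P(\beta)TP(\alpha)=(P(\beta)\xi)(P(\alpha)\xi)^*$, this operator vanishes.

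For minimality: suppose $\lambda$ is $\omega$-closed with $T\in\S(\lambda)$. Then any rectangle $\alpha\times\beta$ marginally disjoint from $\lambda$ satisfies $P(\beta)\xi=0$ or $P(\alpha)\xi=0$, so $\alpha\times\beta$ is marginally disjoint from $\alpha_\xi\times\alpha_\xi$. Since $\lambda^c$ is marginally a countable union of such rectangles, it follows that $\alpha_\xi\times\alpha_\xi\oincl\lambda$.

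\textbf{Passing to the whole family.} Set $\Omega=\bigcup_\omega\{\alpha_\xi\times\alpha_\xi:\xi\xi^*\in\R\}$. By the result of \cite{stt} quoted in Section~\ref{s_lpd}, $\Omega$ is marginally a countable union of these squares, so it is square $\omega$-open. I will show $\kappa\cong\ocl(\Omega)$ via two inclusions.

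First, each $\xi\xi^*\in\R$ lies in $\S(\kappa)$. By minimality from the previous step, $\alpha_\xi\times\alpha_\xi\oincl\kappa$. Hence $\Omega\oincl\kappa$, and since $\kappa$ is $\omega$-closed, $\ocl(\Omega)\oincl\kappa$.

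Conversely, take a rectangle $\alpha\times\beta$ with $(\alpha\times\beta)\cap\ocl(\Omega)\cong\emptyset$. Then in particular $(\alpha\times\beta)\cap(\alpha_\xi\times\alpha_\xi)\cong\emptyset$ for every $\xi$, so $P(\beta)\xi\xi^*P(\alpha)=0$. Thus every element of $\R$ lies in $\S(\ocl(\Omega))$, because $\ocl(\Omega)$ is $\omega$-closed and the support condition only needs to be checked on rectangles. By the minimality characterisation of $\osupp$, this gives $\kappa\oincl\ocl(\Omega)$.

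\textbf{Expected main obstacle.} The delicate step is the bookkeeping with marginal equivalence. One must make sure that the minimality characterisation of $\osupp$ applies to the non-bimodule set $\R$; it does, since $\osupp(\U)$ is by definition the support of $\RefHull([\U]_\D)$. One must also verify that being supported by an $\omega$-closed set only needs to be tested on rectangles disjoint from it, which is exactly the definition of $\S(\kappa)$. Everything else is routine.
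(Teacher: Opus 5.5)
Your proof is correct and follows essentially the same route as the paper's: both identify $\osupp(\R)$ as the $\omega$-closure of a countable union of the squares $\alpha_\xi\times\alpha_\xi$ supporting the operators $\xi\xi^*\in\R$, and you write out in full the verification that the paper calls straightforward. The only difference is how countability is obtained. The paper takes a countable norm-dense subset $\{\xi_i\xi_i^*\}$ of $\R$ and must then use closedness of $\S(\kappa)$ to cover the rest of $\R$. You instead take the $\omega$-union of all the squares and use the fact that it is marginally a countable union, which makes that approximation step unnecessary.
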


\begin{proof}
The proof proceeds similarly to that of \cite[Lemma~5.1]{eks};
we outline it for the convenience of the reader.
Fix a countable dense subset $\{\xi_i\xi_i^*\}_{i\in \bb{N}}$ of $\cl R$, and
let $\alpha_i$ be the essential support of $\xi_i$, $i\in \bb{N}$.
Let
$\kappa =
\operatorname{cl}_{\omega}
\left(
\bigcup_{i=1}^{\infty}
(\alpha_i\times \alpha_i)
\right)$; by its definition, $\kappa$ is generated by squares.
It is now straightforward to check that $\kappa = {\rm supp}_{\omega}(\cl R)$.
\end{proof}

\begin{theorem}\label{th_psr}
Let $\kappa\subseteq X\times X$ be an $\omega$-closed set. The following are equivalent:
\begin{enumerate}
\item $\S(\kappa)$ is positive-full;
\item $\kappa$ is generated by squares;
\item $\S(\kappa) = \overline{[\R(H)^+\cap \S(\kappa)]_{\D}}^{w}$.
\end{enumerate}
\end{theorem}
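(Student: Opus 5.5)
We prove the cycle (i)$\Rightarrow$(ii)$\Rightarrow$(iii)$\Rightarrow$(i), using Lemma~\ref{lem:squares}, Remark~\ref{r_gsq} and the correspondence between reflexive masa-bimodules and $\omega$-closed sets.

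For (i)$\Rightarrow$(ii), suppose $\S(\kappa)=\RefHull(\R)$ with $\R\subseteq\R(H)^+$. Since $\RefHull(\R)$ is a masa-bimodule containing $\R$, it contains $[\R]_\D$. Hence $\RefHull([\R]_\D)\subseteq\RefHull(\RefHull(\R))=\RefHull(\R)$. The reverse inclusion is trivial, so $\S(\kappa)=\RefHull([\R]_\D)$. By the definition of the $\omega$-support this gives $\kappa\cong\osupp(\R)$, and Lemma~\ref{lem:squares} shows that $\kappa$ is generated by squares.

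For (ii)$\Rightarrow$(iii), write $\osq(\kappa)\cong\bigcup_i\alpha_i\times\alpha_i$ with each $\alpha_i\times\alpha_i\oincl\kappa$. For every $\xi$ essentially supported in $\alpha_i$, the operator $\xi\xi^*$ is supported by $\alpha_i\times\alpha_i$, and hence by $\kappa$. So $\xi\xi^*\in\R(H)^+\cap\S(\kappa)$. Let $\cl V=\overline{[\R(H)^+\cap\S(\kappa)]_\D}^w$; this is a WOT-closed masa-bimodule contained in $\S(\kappa)$. The key step is to show $\osupp(\cl V)\cong\kappa$.

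\begin{itemize}
\item If $\osupp(\cl V)\cong\lambda$, then every $\xi\xi^*$ above lies in $\S(\lambda)$. Taking $\xi=\chi_\gamma$ for $\gamma\subseteq\alpha_i$ of finite measure, a direct check gives $\alpha_i\times\alpha_i\oincl\lambda$.
\item Therefore $\osq(\kappa)\oincl\lambda$, and $\kappa\cong\oclsq(\kappa)\oincl\lambda$. Since $\lambda\oincl\kappa$ trivially, $\lambda\cong\kappa$.
\end{itemize}

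This only gives $\RefHull(\cl V)=\S(\kappa)$, and we still need $\cl V=\S(\kappa)$. Here I would use Theorem~\ref{th_sr}: $\kappa$ is generated by rectangles (squares are rectangles), so $\S(\kappa)$ is the WOT-closed span of its rank one operators. It then suffices to show every rank one $\eta\xi^*\in\S(\kappa)$ lies in $\cl V$. For this I would invoke the \cite{eks} fact that a WOT-closed masa-bimodule $\cl V$ with $\osupp(\cl V)\cong\kappa$ contains every rank one operator of $\S(\kappa)$; the rank one operators of $\S(\kappa)$ are exactly those $\eta\xi^*$ whose support rectangle $\supp\xi\times\supp\eta$ is marginally contained in $\kappa$.

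An alternative, if that fact is unavailable, is a direct argument. Write $\beta\times\alpha\oincl\kappa\cong\oclsq(\kappa)$, so that $\beta\times\alpha\oincl\ocl(\bigcup_i\alpha_i\times\alpha_i)$, and approximate $\chi_\alpha\chi_\beta^*$ weakly by combinations of $D_1\chi_{\alpha_i}\chi_{\alpha_i}^*D_2=\chi_{\alpha\cap\alpha_i}\chi_{\beta\cap\alpha_i}^*$. I expect this to be the main obstacle: turning $\omega$-density of the union of squares in a rectangle into WOT approximation of the rank one operator. It is essentially the content of \cite[Lemma 5.1/Theorem 6.11]{eks}, and I would try to cite their "synthesis" result for masa-bimodules generated by rank one operators.

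For (iii)$\Rightarrow$(i), let $\R=\R(H)^+\cap\S(\kappa)$. Then $\RefHull(\R)=\RefHull([\R]_\D)$ is WOT-closed, so it contains $\overline{[\R]_\D}^w=\S(\kappa)$ by (iii). Conversely, $\RefHull(\R)\subseteq\RefHull(\S(\kappa))=\S(\kappa)$, since $\S(\kappa)$ is reflexive. Hence $\S(\kappa)=\RefHull(\R)$ is positive-full.
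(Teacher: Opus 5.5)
Your argument is correct apart from the step you flag and one small omission, but it is organised differently from the paper's. Your (i)$\Rightarrow$(ii) is the paper's argument, with the identification $\kappa\cong\osupp(\R)$ spelled out.

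The paper then derives (iii) and (i) \emph{together} from (ii), using the explicit set $\R'$ of all $\eta\xi^*$ with $\xi,\eta\in\ran P(\alpha_i)$. The proof of \cite[Theorem~5.2]{eks} gives $\S(\kappa)=\RefHull(\R')$. Then \cite[Theorem~6.11]{eks} is invoked to get $\S(\kappa)=\overline{[\R']}^w$. Polarisation gives $\R'\subseteq[\R]$ for $\R=\bigcup_i\{\xi\xi^*:\xi\in\ran P(\alpha_i)\}$. The paper closes the loop with an $\omega$-support argument for (iii)$\Rightarrow$(ii).

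You instead compute $\osupp(\cl V)$ by hand, which replaces the appeal to \cite[Theorem~5.2]{eks} and is fine. You then reduce $\cl V=\S(\kappa)$ to the claim that $\cl V$ contains every rank one operator of $\S(\kappa)$, and prove (iii)$\Rightarrow$(i) directly with reflexive covers. Your route isolates the single non-formal input cleanly. The paper's route produces a concrete positive generating set that yields (i) and (iii) at once.

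Two points need tightening. First, the step you flag as the main obstacle is exactly where the paper also leans on \cite{eks}. The fact you quote is true, but for your $\cl V$ it has a short direct proof, so no citation is needed.
\begin{itemize}
\item Let $\psi(T)=\sum_{k=1}^n\langle Tf_k,g_k\rangle$ be a WOT-continuous functional vanishing on $\cl V$, and put $\Phi(x,y)=\sum_k f_k(x)\overline{g_k(y)}$. Then $\psi(\eta\xi^*)=\iint\Phi(x,y)\overline{\xi(x)}\eta(y)\,d\mu(x)\,d\mu(y)$.
\item The set $\{\Phi\neq 0\}$ is the preimage of an open subset of $\mathbb{C}^n\times\mathbb{C}^n$ under $(x,y)\mapsto(f(x),g(y))$. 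Hence it is a countable union of rectangles, and $\{\Phi=0\}$ is $\omega$-closed.
\item By polarisation, $\cl V$ contains all $\eta\xi^*$ with $\xi,\eta\in\ran P(\alpha_i)$. So $\Phi=0$ a.e.\ on each $\alpha_i\times\alpha_i$, hence marginally a.e., since a null countable union of rectangles is marginally null.
\item Thus $\{\Phi=0\}$ marginally contains $\ocl(\bigcup_i\alpha_i\times\alpha_i)\cong\kappa$, and $\psi$ annihilates every rank one operator in $\S(\kappa)$.
\end{itemize}
Hahn--Banach and Theorem~\ref{th_sr} then give $\cl V=\S(\kappa)$.

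Second, in (iii)$\Rightarrow$(i) you assert $\RefHull(\R)=\RefHull([\R]_\D)$ without justification, and this is false for general sets. For example, $\R=\{I\}$ gives $\RefHull(\R)=\mathbb{C}I\neq\D=\RefHull([\R]_\D)$. For $\R=\R(H)^+\cap\S(\kappa)$ it does hold, because $[\R]_\D=[\R]$. Indeed, if $\xi\xi^*\in\R$ and $\alpha=\supp\xi$, then $\eta\eta^*\in\R$ for every non-zero $\eta\in\ran P(\alpha)$. By polarisation, $D_1\xi\xi^*D_2=(D_1\xi)(D_2^*\xi)^*\in[\R]$. This is the same polarisation trick the paper uses.
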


\begin{proof}
(i)$\Rightarrow$(ii). Let $\R\subseteq \R(H)^+$ be such that $\S(\kappa) = \RefHull(\R)$. Then $\kappa \cong \osupp(\S(\kappa)) \cong \osupp(\R)$. By Lemma~\ref{lem:squares}, $\kappa$ is generated by squares.\medskip

(ii)$\Rightarrow$(iii) and (i). By assumption, there exist measurable subsets $\alpha_i \subseteq X$, $i\in \bb{N}$, such that $\kappa \cong \ocl\left(\bigcup_{i\in \bb{N}} \alpha_i\times\alpha_i\right)$. Let
$$\R' = \bigcup_{i\in \bb{N}} \{\eta\xi^* : \xi, \eta \in \ran P(\alpha_i)\} \quad \text{and} \quad \R = \bigcup_{i\in \bb{N}} \{\xi\xi^* : \xi \in \ran P(\alpha_i)\}.$$
By the proof of \cite[Theorem 5.2]{eks}, $\S(\kappa) = \RefHull(\R')$. Since $\S(\kappa)$ is a strongly reflexive masa-bimodule, \cite[Theorem 6.11]{eks} implies that $\S(\kappa) = \overline{[\R']}^{w}$. By polarisation, $\R'\subseteq [\R]$. Since $\R \subseteq \R(H)^+\cap \S(\kappa)$, we obtain
$$\S(\kappa) = \overline{[\R']}^{w} \subseteq \overline{[\R]_{\D}}^{w} \subseteq \overline{[\R(H)^+\cap \S(\kappa)]_{\D}}^{w} \subseteq \overline{[\S(\kappa)]_\D}^w \subseteq \S(\kappa).$$
This establishes (iii).
Moreover, we have
$$\S(\kappa) = \RefHull(\R') \subseteq \RefHull([\R]) = \RefHull(\R) \subseteq \S(\kappa),$$
showing that $\S(\kappa) = \RefHull(\R)$ and hence that $\S(\kappa)$ is positive-full,
that is, (i) holds.\medskip

(iii)$\Rightarrow$(ii). Let $\R = \R(H)^+\cap \S(\kappa)$; by assumption, $\S(\kappa) = \overline{[\R]_{\D}}^{w}$.
Every $T \in \R$ is a positive rank-one operator, so its $\omega$-support is a square and hence is $\omega$-open. Let $\E = \{\osupp(T) : T \in \R\}$. By Remark~\ref{r_gsq}, $\kappa_0:= \ocl\left( \bigcup_\omega \E \right)$ is generated by squares, so it suffices to show that $\kappa\cong\kappa_0$.

By construction, we have $\R\subseteq \S(\kappa_0)$, so $\S(\kappa)=\overline{[\R]_\D}^w\subseteq \S(\kappa_0)$; hence, $\kappa\oincl\kappa_0$.
Conversely, since $\R \subseteq \S(\kappa)$, we have $E \oincl \kappa$ for all $E \in \E$. Each such $E$ is $\omega$-open, so $E\oincl\ointer(\kappa)$. Hence, $\bigcup_\omega \E\oincl\ointer(\kappa)$, and taking $\omega$-closures yields $\kappa_0\oincl \oclint\kappa\oincl\kappa$, so $\kappa\cong\kappa_0$, as required.
\end{proof}

We will need the following more precise version of the equivalence of statements (ii) and~(iii) in Theorem \ref{th_sr}, which will be used later.

\begin{proposition}\label{l_goi}
Let $\kappa\subseteq X\times X$ be an $\omega$-closed set.
The following assertions hold:
\begin{enumerate}
\item[(i)] $\overline{[\R(H)\cap \S(\kappa)]}^{w} = \S(\oclint(\kappa))$;

\item[(ii)] $\overline{[\R(H)^+\cap \S(\kappa)]_\D}^{w} = \S(\oclsq(\kappa))$.
\end{enumerate}
\end{proposition}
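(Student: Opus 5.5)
Both parts follow the same pattern: identify the $\omega$-support of the relevant set of rank one operators, then invoke Theorem~\ref{th_sr} (for (i)) or Theorem~\ref{th_psr} (for (ii)) applied to that support. We give the plan for (ii); (i) is identical with rectangles in place of squares. Note that the linear span of $\R(H)\cap\S(\kappa)$ is already a $\D$-bimodule, so $[\cdot]$ and $[\cdot]_\D$ agree there.

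\emph{Step 1: identify the support.} Let $\R=\R(H)^+\cap \S(\kappa)$ and $\kappa' = \oclsq(\kappa)$; we aim to show $\osupp(\R)\cong\kappa'$. For $T=\xi\xi^*\in\R$, $\osupp(T)$ is the square $\alpha\times\alpha$ with $\alpha$ the essential support of $\xi$. Since $T\in\S(\kappa)$, we get $\alpha\times\alpha\oincl\kappa$, hence $\alpha\times\alpha\oincl\osq(\kappa)$, so $T\in\S(\kappa')$. Thus $\osupp(\R)\oincl\kappa'$.

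Conversely, if $\alpha\times\alpha\oincl\kappa$ then $\xi\xi^*\in\R$ for every $\xi\in\ran P(\alpha)$. The union of these operators has support containing $\alpha\times\alpha$, because $P(\beta)\xi\xi^*P(\gamma)=0$ for all such $\xi$ forces $(\gamma\times\beta)\cap(\alpha\times\alpha)$ to be marginally null. Taking the $\omega$-union over all squares marginally contained in $\kappa$ gives $\osq(\kappa)\oincl\osupp(\R)$, and since $\osupp(\R)$ is $\omega$-closed, $\kappa'\oincl\osupp(\R)$.

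\emph{Step 2: conclude.} By Remark~\ref{r_gsq}, $\kappa'$ is generated by squares, so Theorem~\ref{th_psr} gives
\[\S(\kappa')=\overline{[\R(H)^+\cap\S(\kappa')]_\D}^w.\]
Since $\kappa'\oincl\kappa$, we have $\R(H)^+\cap\S(\kappa')\subseteq\R$. The reverse inclusion $\R\subseteq\S(\kappa')$ was shown in Step 1. Hence $\R=\R(H)^+\cap\S(\kappa')$, and (ii) follows.

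For (i), the same argument applies with $\R=\R(H)\cap\S(\kappa)$. The support of $\eta\xi^*$ is the rectangle $\alpha\times\beta$, where $\alpha$ and $\beta$ are the essential supports of $\xi$ and $\eta$ respectively, and this rectangle is marginally contained in $\kappa$ exactly when $\eta\xi^*\in\S(\kappa)$. One then invokes Theorem~\ref{th_sr}\,(iii) in place of Theorem~\ref{th_psr}, using that $\oclint(\kappa)$ is generated by rectangles.

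\emph{Main obstacle.} The delicate point is the support computation for a rank one operator: the claim that $\eta\xi^*\in\S(\kappa)$ if and only if $\alpha\times\beta\oincl\kappa$ (with the orientation conventions of the definition). The ``only if'' direction requires passing from $P(\beta')\eta\xi^*P(\alpha')=0$ for all rectangles $\alpha'\times\beta'$ disjoint from $\kappa$ to the marginal inclusion. Here I would use the standard fact from \cite{eks} that an $\omega$-open set is marginally a countable union of rectangles: the complement $\kappa^c$ is such a union, and each of its rectangles must meet $\alpha\times\beta$ in a marginally null set.
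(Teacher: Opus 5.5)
Your proposal is correct and is essentially the paper's argument. In both, one shows that every (positive) rank one operator in $\S(\kappa)$ already lies in $\S(\oclint(\kappa))$ (resp.\ $\S(\oclsq(\kappa))$). This is combined with $\oclint(\kappa)\oincl\kappa$ (resp.\ $\oclsq(\kappa)\oincl\kappa$) and with Theorem~\ref{th_sr} (resp.\ Theorem~\ref{th_psr}) applied to the regularised set. The second half of your Step~1, which computes $\osupp(\R)$ exactly, is correct but not needed for the conclusion.
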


\begin{proof}
(i)
Since $\kappa$ is $\omega$-closed, we have $\oclint(\kappa)\oincl\kappa$. The set $\oclint(\kappa)$ is generated by rectangles, so by Theorem \ref{th_sr}, we have
\[\S(\oclint(\kappa)) = \overline{[\R(H)\cap \S(\oclint(\kappa))]}^{w}\subseteq\overline{[\R(H)\cap \S(\kappa)]}^{w}.\]

On the other hand, any $\eta\xi^*\in \R(H)\cap \S(\kappa)$ has support $R = \supp(\xi)\times\supp(\eta)\oincl\kappa$, so
$R \oincl \ointer(\kappa) \oincl \oclint(\kappa)$.
Hence,  $\eta\xi^* \in \S(\oclint(\kappa))$, so $\R(H)\cap \S(\kappa) \subseteq \S(\oclint(\kappa))$,
and thus $\overline{[\R(H)\cap \S(\kappa)]}^{w} \subseteq \S(\oclint(\kappa))$.

(ii) The argument is similar to that in (i), but we include it for completeness.
Since $\kappa$ is $\omega$-closed, we have $\oclsq(\kappa)\oincl\kappa$. The set $\oclsq(\kappa)$ is generated by squares, so by Theorem~\ref{th_psr} we have
\[\S(\oclsq(\kappa)) = \overline{[\R(H)^+\cap \S(\oclsq(\kappa))]_\D}^{w}\subseteq\overline{[\R(H)^+\cap \S(\kappa)]_\D}^{w}.\]

On the other hand, any $\xi\xi^*\in \R(H)^+\cap \S(\kappa)$ has support $Q = \supp(\xi)\times\supp(\xi)\oincl\kappa$, so
$Q \oincl \osq(\kappa) \oincl \oclsq(\kappa)$.
Hence,  $\xi\xi^* \in \S(\oclsq(\kappa))$, so $\R(H)^+\cap \S(\kappa) \subseteq \S(\oclsq(\kappa))$ and the conclusion follows.
\end{proof}

\section{Lattice structures}\label{s_lmb}

In this section, we examine the lattice-theoretic properties of various classes of strongly reflexive masa-bimodular subspaces and operator systems under natural lattice operations defined using rank one operators.
Fix $H=L^2(X,\mu)$ and $\D=L^\infty(X,\mu)$ as in the previous section, and let
\[ \fM = \{\S(\kappa):\text{$\kappa\subseteq X\times X$ and $\kappa$ is generated by rectangles}\}.\]

For $\S_1,\S_2\in \fM$, define
\begin{equation}\label{eq_defofvee}
\S_1 \vee \S_2 = \overline{\S_1 + \S_2}^w,
\quad \S_1 \wedge \S_2 = \overline{[\R(H)\cap \S_1\cap \S_2]}^w
\end{equation}
and
$$\S_1 \wedge_+ \S_2 = \overline{[\R(H)^+\cap \S_1\cap \S_2]_{\D}}^w.$$
It is clear that $\S_1 \vee \S_2$, $\S_1 \wedge \S_2$ and $\S_1 \wedge_+ \S_2$ are
WOT-closed masa-bimodules.

\begin{lemma}\label{l_lat}
  Let $\S_i\in \fM$, and let $\kappa_i\subseteq X\times X$ be
  $\omega$-closed sets, generated by rectangles, such that $\S_i=\S(\kappa_i)$, $i=1,2$. Then
  \begin{enumerate}
  \item $\S_1\vee \S_2 \in \fM$ and $\S_1\vee \S_2=\S(\kappa_1\cup \kappa_2)$;
  \item $\S_1\cap \S_2=\S(\kappa_1\cap \kappa_2)$;
  \item $\S_1\wedge \S_2 \in \fM$ and
  $\S_1\wedge \S_2=\S(\oclint(\kappa_1\cap \kappa_2))$.
  \end{enumerate}
  Moreover, $\S_1\vee\S_2$ (resp.~$\S_1\wedge\S_2$) is the least upper bound (resp.~the greatest lower bound) of $\S_1$ and $\S_2$ in $\fM$ with respect to inclusion.
\end{lemma}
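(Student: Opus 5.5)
We prove (ii) first, then (iii), then (i), and finish with the extremal properties.

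For (ii), we check directly from the definition of support. If $T\in\S(\kappa_1)\cap\S(\kappa_2)$, then $\osupp(\{T\})$ is $\omega$-closed and marginally contained in both $\kappa_1$ and $\kappa_2$, hence in $\kappa_1\cap\kappa_2$. By the minimality characterisation of $\osupp$, this gives $T\in\S(\kappa_1\cap\kappa_2)$. The reverse inclusion is immediate, since $\S(\cdot)$ is monotone with respect to $\omega$-inclusion. Note that $\kappa_1\cap\kappa_2$ is $\omega$-closed.

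For (iii), we use (ii) to write $\R(H)\cap\S_1\cap\S_2=\R(H)\cap\S(\kappa_1\cap\kappa_2)$. Proposition~\ref{l_goi}(i), applied to the $\omega$-closed set $\kappa_1\cap\kappa_2$, then gives $\S_1\wedge\S_2=\S(\oclint(\kappa_1\cap\kappa_2))$. By Remark~\ref{r_gsq}, this set is generated by rectangles, so $\S_1\wedge\S_2\in\fM$.

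For (i), the set $\kappa_1\cup\kappa_2$ is $\omega$-closed, and by Corollary~\ref{c_gru} it is generated by rectangles. Hence $\S(\kappa_1\cup\kappa_2)\in\fM$. Since $\S(\kappa_1\cup\kappa_2)$ contains $\S_1+\S_2$ and is WOT-closed, we get $\S_1\vee\S_2\subseteq\S(\kappa_1\cup\kappa_2)$.

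The reverse inclusion is the main step, and I would argue via rank one operators. By Theorem~\ref{th_sr}, $\S(\kappa_1\cup\kappa_2)$ is the WOT-closed span of its rank one operators. Its support is $\oclint(\kappa_1\cup\kappa_2)\cong\ocl(\Omega_1\cup\Omega_2)$ by Theorem~\ref{th_oc}, where $\Omega_i=\ointer(\kappa_i)$. Write $\Omega_i$ as a countable union of rectangles $R^i_n$. Following the proof of \cite[Theorem 5.2]{eks}, as in the proof of Theorem~\ref{th_psr}, $\S(\ocl(\bigcup_{i,n}R^i_n))$ is the reflexive hull of the rank one operators supported in the rectangles $R^i_n$. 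Since it is strongly reflexive, \cite[Theorem 6.11]{eks} shows it is the WOT-closed span of these operators. Each such operator lies in $\S_1$ or in $\S_2$, so $\S(\kappa_1\cup\kappa_2)\subseteq\overline{\S_1+\S_2}^w$.

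An alternative argument is available. $\S_1\vee\S_2$ is a WOT-closed masa-bimodule containing a set of rank one operators whose reflexive hull is $\S(\kappa_1\cup\kappa_2)$. Since the latter is generated by its rank one operators, it would suffice to show that every rank one operator in $\S(\kappa_1\cup\kappa_2)$ lies in $\S_1\vee\S_2$. The route through the \cite{eks} span statement above is cleaner, so I would follow it.

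Finally, we prove the extremal properties. If $\S(\lambda)\in\fM$ contains $\S_1$ and $\S_2$, then it contains $\S_1+\S_2$. Being WOT-closed, it contains $\S_1\vee\S_2$, so $\S_1\vee\S_2$ is the least upper bound.

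Now suppose $\S(\lambda)\in\fM$ is contained in $\S_1\cap\S_2$. Its rank one operators lie in $\R(H)\cap\S_1\cap\S_2$. By Theorem~\ref{th_sr}, $\S(\lambda)$ is the WOT-closed span of these operators, so $\S(\lambda)\subseteq\S_1\wedge\S_2$. Clearly $\S_1\wedge\S_2\subseteq\S_1\cap\S_2$, so $\S_1\wedge\S_2$ is the greatest lower bound.

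The only delicate point is (i): it relies on Theorem~\ref{th_oc} together with the \cite{eks} description of the reflexive hull of rectangle-supported rank one operators.
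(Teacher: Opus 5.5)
Your proposal is correct. Parts (ii), (iii) and the least-upper-bound claim follow the paper. Your treatment of (i) and of the greatest-lower-bound property takes a different route.

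\textbf{Part (i).} The paper works on the operator side. It first shows directly that $\osupp(\S_1\vee\S_2)\cong\kappa_1\cup\kappa_2$: a rectangle $\alpha\times\beta$ missing the support satisfies $P(\beta)\S_iP(\alpha)=\{0\}$, so it misses both $\kappa_i$. It then observes that $\S_1\vee\S_2$ is the WOT-closed span of rank one operators, hence strongly reflexive and determined by its support. In that argument, generation by rectangles of $\kappa_1\cup\kappa_2$ is an output, and Theorem~\ref{th_oc} is not used.

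You work on the set side first. You write $\kappa_1\cup\kappa_2\cong\ocl(\bigcup_{i,n}R^i_n)$ and reuse the \cite{eks} step from the proof of Theorem~\ref{th_psr}. This exhibits $\S(\kappa_1\cup\kappa_2)$ as the WOT-closed span of rank one operators, each lying in $\S_1$ or in $\S_2$. Your route avoids computing the support of $\S_1\vee\S_2$, and its operator-theoretic input is exactly what the paper already accepts in Theorem~\ref{th_psr}.

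Theorem~\ref{th_oc} and Corollary~\ref{c_gru} are more than you need, though. Since $\kappa_i\cong\ocl(\Omega_i)$, Corollary~\ref{lem_oc}\,(iv) alone gives $\kappa_1\cup\kappa_2\cong\ocl(\Omega_1\cup\Omega_2)$. Remark~\ref{r_gsq} then shows the union is generated by rectangles. So the point you flag as delicate is elementary on the set side.

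\textbf{Greatest lower bound.} The paper argues with supports: $\lambda\oincl\kappa_1\cap\kappa_2$ gives $\lambda\cong\oclint(\lambda)\oincl\oclint(\kappa_1\cap\kappa_2)$. You argue directly from the definition of $\wedge$: $\S(\lambda)$ is the WOT-closed span of its rank one operators by Theorem~\ref{th_sr}, and all of these lie in $\R(H)\cap\S_1\cap\S_2$. Both are valid. Yours is slightly more direct and uses (iii) only to know that $\S_1\wedge\S_2\in\fM$.
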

\begin{proof}
(i)
Since $\S(\kappa_1 \cup \kappa_2)$ is a WOT-closed masa-bimodule containing both $\S_1$ and $\S_2$, we have $\S_1\vee\S_2 \subseteq \S(\kappa_1 \cup\kappa_2)$, hence $\osupp(\S_1\vee\S_2) \oincl \kappa_1\cup\kappa_2$.

Conversely, if $\alpha\times\beta$ is a rectangle with $(\alpha\times\beta) \cap \osupp(\S_1\vee\S_2) \cong \emptyset$, then $P(\beta)(\S_1 + \S_2)P(\alpha) = \{0\}$. Thus, $P(\beta)\S_i P(\alpha) = \{0\}$, implying that $(\alpha\times\beta) \cap \kappa_i \cong \emptyset$ for $i=1,2$. Consequently, $(\alpha\times\beta) \cap (\kappa_1\cup\kappa_2) \cong \emptyset$, which establishes that $\kappa_1\cup\kappa_2 \oincl \osupp(\S_1\vee\S_2)$.
Thus,
\begin{equation}\label{eq_k12s}
\osupp(\S_1\vee\S_2) \cong \kappa_1\cup\kappa_2.
\end{equation}
Using Theorem~\ref{th_sr}, we have that $\S_1$ and $\S_2$ are generated by their rank one operators, so the WOT-closed masa-bimodule $\S_1\vee\S_2=\overline{\S_1 + \S_2}^w$ is also generated by its rank one operators, and hence is strongly reflexive and completely determined by its $\omega$-support. Therefore, statement (i) follows directly from (\ref{eq_k12s}).
It is clear that $\S_1 \vee \S_2$ is the least upper bound of $\S_1$ and $\S_2$ in $\fM$ with respect to inclusion.

(ii) is easy to verify; the argument is omitted.

(iii)
Let $\kappa=\oclint(\kappa_1\cap\kappa_2)$. Then $\S(\kappa)\in \fM$ by Theorem~\ref{th_sr}. Using (ii) and Proposition \ref{l_goi}, we have $\S_1\wedge \S_2=\S(\kappa)$, so (iii) holds.
To see that $\S_1\wedge\S_2$ is the greatest lower bound of $\S_1$ and $\S_2$ with respect to inclusion, suppose that $\S\in \fM$ is such that
$\S\subseteq \S_1 \cap \S_2$. Letting $\lambda = \osupp(\S)$, we then have that
$\lambda \oincl \kappa_1\cap \kappa_2$, and hence
$$\lambda \cong \oclint(\lambda) \oincl \oclint(\kappa_1\cap \kappa_2)=\kappa,$$
so $\S=\S(\lambda)\subseteq \S(\kappa)= \S_1\wedge \S_2$, as required.
\end{proof}

If $\kappa$ is an $\omega$-closed set, we define its \emph{complementary set} $\kappa'$
by letting $\kappa' = \ocl(\kappa^c)$. Recall that if $\lambda\subseteq X\times X$ is measurable,
then $\ocl(\lambda)=\ointer(\lambda^c)^c$. Taking $\lambda=\kappa^c$ we obtain that
\begin{equation}\label{eq_com'}
\kappa' = \ocl(\kappa^c)=\ocl(\lambda)=\ointer(\lambda^c)^c=\ointer(\kappa)^c.
\end{equation}

Let us write $\mathbf{1} = \cl B(H)$ and $\mathbf{0}=\{0\}$,
and, when $\kappa\subseteq X\times X$ is generated by rectangles, set
$\S(\kappa)' := \S(\kappa')$.

\begin{theorem}\label{th_com}
$(\fM,\vee,\wedge,\mathbf{0},\mathbf{1},{}^\prime)$ is a Boolean algebra.
\end{theorem}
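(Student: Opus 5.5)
The approach is to transport everything to supports. By Theorem~\ref{th_sr}, the map $\S(\kappa)\mapsto\kappa$ is an order isomorphism (inclusion versus $\oincl$) from $\fM$ onto the class $\fG$ of sets generated by rectangles, modulo marginal equivalence. By Lemma~\ref{l_lat}, $\vee$ corresponds to $\kappa_1\cup\kappa_2$ (which lies in $\fG$ by Corollary~\ref{c_gru}) and $\wedge$ to $\kappa_1\wedge\kappa_2:=\oclint(\kappa_1\cap\kappa_2)$. Both are the lattice join and meet. Moreover $\mathbf 0=\S(\emptyset)$ and $\mathbf 1=\S(X\times X)$, and both $\emptyset$ and $X\times X$ lie in $\fG$. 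So it suffices to prove that $(\fG,\cup,\wedge,\emptyset,X\times X,{}')$ is a Boolean algebra. This needs three things: $\kappa'\in\fG$, distributivity, and the complement identities $\kappa\cup\kappa'\cong X\times X$ and $\kappa\wedge\kappa'\cong\emptyset$.

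First, $\kappa'$ is well defined and lies in $\fG$. By \eqref{eq_com'}, $\kappa'=\ointer(\kappa)^c$. Since $\kappa\cong\ocl(\ointer\kappa)$, we have $\kappa^c\cong\ointer(\ointer(\kappa)^c)$, so $\kappa'=\ocl(\kappa^c)\cong\ocl(\ointer(\kappa'))=\oclint(\kappa')$, using that $\kappa^c$ is $\omega$-open. Thus $\kappa'\in\fG$ by Remark~\ref{r_gsq}.

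Next, the complement identities. We have $\kappa\cup\kappa'\supseteq\kappa\cup\kappa^c=X\times X$. For the meet, $\kappa\cap\kappa'=\kappa\cap\ointer(\kappa)^c$. Any rectangle $R\oincl\kappa\cap\kappa'$ satisfies $R\oincl\ointer(\kappa)$ and $R\cap\ointer(\kappa)\cong\emptyset$, so $R\cong\emptyset$. Hence $\ointer(\kappa\cap\kappa')\cong\emptyset$ and $\kappa\wedge\kappa'\cong\emptyset$.

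For distributivity, it suffices in a lattice with complements to prove one distributive law; equivalently, a complemented lattice is Boolean once $\kappa\wedge(\lambda\cup\nu)\cong(\kappa\wedge\lambda)\cup(\kappa\wedge\nu)$. The key tool is Theorem~\ref{th_oc}, applied to the $\omega$-closed sets $\kappa\cap\lambda$ and $\kappa\cap\nu$:
\[
\oclint\big((\kappa\cap\lambda)\cup(\kappa\cap\nu)\big)\cong\oclint(\kappa\cap\lambda)\cup\oclint(\kappa\cap\nu).
\]
The left-hand side equals $\oclint(\kappa\cap(\lambda\cup\nu))=\kappa\wedge(\lambda\cup\nu)$, which is exactly the claim. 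I expect this step to be the crux. Without Theorem~\ref{th_oc}, the meet $\oclint(\cdot\cap\cdot)$ would not obviously interact with unions; with it, the verification is immediate.

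Finally, the remaining lattice axioms are already available. Absorption, commutativity and associativity hold because $\vee,\wedge$ are least upper and greatest lower bounds (Lemma~\ref{l_lat}), and $\mathbf 0,\mathbf 1$ are clearly the bottom and top elements of $\fM$. Transporting back along the isomorphism, with $\S(\kappa)'=\S(\kappa')$, gives the Boolean algebra structure on $\fM$.
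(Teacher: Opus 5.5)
Your proof is correct and follows essentially the same route as the paper: the lattice structure comes from Lemma~\ref{l_lat}, distributivity from Theorem~\ref{th_oc} applied to the $\omega$-closed sets $\kappa\cap\lambda$ and $\kappa\cap\nu$, and the complement identities from $\kappa'=\ointer(\kappa)^c$. The differences are only cosmetic. You prove one distributive law and invoke the standard fact that the two laws are equivalent in any lattice, so the qualifier ``with complements'' is unnecessary. You also check explicitly that $\kappa'$ is generated by rectangles, which the paper leaves implicit. Finally, you skip $(\kappa')'\cong\kappa$, which is legitimate because complements are unique in a distributive lattice.
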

\begin{proof}
  By Lemma~\ref{l_lat}, $(\fM,\vee,\wedge)$ is a lattice. We first show that this lattice is distributive.
Let $\S=\S(\kappa)$ where $\kappa \subseteq X\times X$ is generated by rectangles.
Using Lemma~\ref{l_lat} and Theorem~\ref{th_oc}, we have
\begin{align*}
(\S_1\vee\S_2)\wedge \S
& =
\S(\oclint((\kappa_1\cup\kappa_2)\cap \kappa))
=
\S(\oclint((\kappa_1\cap \kappa)\cup (\kappa_2\cap \kappa)))\\
& =
\S(\oclint(\kappa_1\cap\kappa)\cup\oclint(\kappa_2\cap\kappa))
  =
(\S_1\wedge \S)\vee (\S_2\wedge \S).
\end{align*}
In addition, since $\kappa\cong\oclint(\kappa)$, using Lemma \ref{l_lat}
and Theorem \ref{th_oc}, we have
\begin{align*}
  (\S_1\wedge\S_2)\vee \S
  &=
    \S(\oclint(\kappa_1\cap\kappa_2)\cup \kappa)
  =
  \S(\oclint((\kappa_1\cap\kappa_2)\cup\kappa))
  \\&=
  \S(\oclint((\kappa_1\cup\kappa)\cap(\kappa_2\cup\kappa))
  =
  (\S_1\vee\S)\wedge(\S_2\vee \S).
\end{align*}

We now check the remaining axioms for a Boolean algebra:
\begin{itemize}
\item Since $\mathbf{0}=\S(\emptyset)$ and $\mathbf{1}=\S(X\times X)$, it is immediate from Lemma~\ref{l_lat} that $\S\wedge \mathbf{1}=\S=\S\vee\mathbf{0}$.
\item We have $\kappa\cong\oclint(\kappa)$, so, using (\ref{eq_com'}),
  \[ (\kappa')'=\ocl((\kappa')^c)=\ocl(\ointer(\kappa))=\oclint(\kappa)\cong \kappa.\]
  Hence, $(\S')'=\S$.
\item We have $\S\wedge\S' = \S(\oclint(\kappa \cap \ocl(\kappa^c)))$,
and $\kappa\cap\ocl(\kappa^c)=\kappa\setminus\ointer(\kappa)$ has empty $\omega$-interior
(see (\ref{eq_com'})). Hence, $\S\wedge\S'=\mathbf{0}$.
\item We have $\S\vee\S' = \S(\kappa\cup \ocl(\kappa^c))$ and $\kappa\cup\ocl(\kappa^c)\supseteq \kappa\cup\kappa=X\times X$. Hence, by Lemma \ref{l_lat}, $\S\vee\S'=\mathbf{1}$.\qedhere
\end{itemize}
\end{proof}

We now introduce some subclasses of $\fM$ and, in the remainder of
the section, examine their lattice-theoretic properties. Let

\begin{enumerate}[label=(\alph*)]
\item $\widehat \fS$ be the set of all strongly reflexive self-adjoint $\D$-bimodules in $\B(H)$;
\item $\fS$ be the set of all strongly reflexive operator $\D$-systems in $\B(H)$;
\item $\widehat\fF$ be the set of all strongly reflexive positive-full self-adjoint $\D$-bimodules in $\B(H)$;
\item $\fF$ be the set of all strongly reflexive positive-full operator $\D$-systems in $\B(H)$;
\item $\fN$ be the set of all strongly reflexive positive-null operator $\D$-systems in $\B(H)$.
\end{enumerate}

We then have that
\[\fS\subseteq \widehat\fS,\qquad \fF\subseteq \widehat\fF,\qquad\fF=\widehat\fF\cap\fS.\]
The Hasse diagram of inclusions for these sets of strongly reflexive masa-bimodules is as follows:
\[\begin{tikzpicture}[
  node distance=.8cm and .5cm,
  every node/.style={
    draw, rounded corners=3pt,
    inner sep=5pt,
    font=\small
  },
  >=stealth,
  edge/.style={draw, -}
]
% --- not neccesarily unital layer (left) ---
\node (M)                              {${\mathfrak{M}}$};
\node (hS) [below=of M]               {$\widehat{\mathfrak{S}}$};
\node (hF) [below left=of hS]          {$\widehat{\mathfrak{F}}$};
% --- unital layer (right, offset diagonally) ---
\node (S)  [below right=of hS]         {$\mathfrak{S}$};
\node (F)  [below left=of S]           {$\mathfrak{F}$};
\node (N)  [below right=of S]          {$\mathfrak{N}$};
% --- vertical edges within hat layer ---
\draw[edge] (M) -- (hS);
\draw[edge] (hS) -- (hF);
% --- vertical edges within plain layer ---
\draw[edge] (S)  -- (F);
\draw[edge] (S)  -- (N);
% --- cross edges (plain \subseteq hat) ---
\draw[edge] (S)  -- (hS);
\draw[edge] (F)  -- (hF);
\end{tikzpicture}
\]

\begin{lemma}\label{lem_c}
  Let $\kappa\subseteq X\times X$ be generated by
  rectangles. The following are equivalent:
  \begin{enumerate}
  \item $\osq(\kappa)$ is marginally null;
  \item $\Delta\cap\ointer(\kappa)$ is marginally null;
  \item $\Delta\oincl\kappa'$.
  \end{enumerate}
\end{lemma}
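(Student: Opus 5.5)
The plan is to prove (ii)$\Leftrightarrow$(iii) first, since it is essentially formal, and then (i)$\Leftrightarrow$(ii), which requires a small rectangle argument.

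\emph{(ii)$\Leftrightarrow$(iii).} By (\ref{eq_com'}), $\kappa'=\ointer(\kappa)^c$. Hence $\Delta\oincl\kappa'$ means exactly that $\Delta\setminus\kappa' = \Delta\cap\ointer(\kappa)$ is marginally null, which is (ii).

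\emph{(ii)$\Rightarrow$(i).} Suppose $\alpha\times\alpha\oincl\kappa$ is a square. Since a square is $\omega$-open, Lemma~\ref{lem_ointer} gives $\alpha\times\alpha\oincl\ointer(\kappa)$. Therefore
\[\Delta\cap(\alpha\times\alpha)\oincl\Delta\cap\ointer(\kappa),\]
and the right-hand side is marginally null by (ii). Now $\Delta\cap(\alpha\times\alpha)=\{(x,x):x\in\alpha\}$, and this set is marginally null only if $\alpha$ is null: a covering $M\times X\cup X\times M$ with $M$ null must contain every $x\in\alpha$ in $M$. Thus every square marginally contained in $\kappa$ is trivial, and $\osq(\kappa)\cong\emptyset$.

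\emph{(i)$\Rightarrow$(ii).} Here I argue by contraposition. Suppose $\Delta\cap\ointer(\kappa)$ is not marginally null. Write $\ointer(\kappa)\cong\bigcup_i \alpha_i\times\beta_i$. The diagonal part of $\alpha_i\times\beta_i$ is $\{(x,x): x\in\alpha_i\cap\beta_i\}$. If every $\alpha_i\cap\beta_i$ were null, then the union of these diagonal pieces would be marginally null, since there are countably many. One has to account for the marginally null discrepancy between $\ointer(\kappa)$ and the union; this is harmless because marginally null sets are closed under countable unions. So some $\gamma=\alpha_i\cap\beta_i$ is non-null, and then $\gamma\times\gamma\subseteq\alpha_i\times\beta_i\oincl\kappa$ is a non-trivial square, contradicting (i).

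The main point needing care is the fact that $\Delta\cap(\gamma\times\gamma)$ is marginally null iff $\gamma$ is null. The "if" direction is clear. For "only if", I would note that $\Delta\cap (M\times X\cup X\times M)=\{(x,x):x\in M\}$, so $\{(x,x):x\in\gamma\}$ is contained in this set only when $\gamma\subseteq M$ up to the relevant points. Measurability of $\Delta$ holds because $X$ is standard. The hypothesis that $\kappa$ is generated by rectangles is not actually needed beyond the setup.
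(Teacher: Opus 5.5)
Your proof is correct and follows essentially the same route as the paper. Like the paper, you get (ii)$\Leftrightarrow$(iii) directly from (\ref{eq_com'}), prove the implication to (i) using the fact that a non-trivial square meets $\Delta$ in a non-marginally-null set (the paper phrases this as $\osq(\kappa)\oincl\ointer(\kappa)\oincl\Delta^c$ with $\osq(\Delta^c)\cong\emptyset$), and prove (i)$\Rightarrow$(ii) by finding a rectangle $\alpha\times\beta\oincl\ointer(\kappa)$ with $\alpha\cap\beta$ non-null; your countable-decomposition argument simply spells out the existence of that rectangle, which the paper asserts, and your remark that the ``generated by rectangles'' hypothesis is never used is accurate.
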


\begin{proof}
(ii)$\Leftrightarrow$(iii) is immediate from (\ref{eq_com'}).

(iii)$\Rightarrow$(i). Observe first that $\osq(\Delta^c)$ is marginally null, because any non-trivial square $\alpha\times\alpha$ intersects $\Delta$ in $\{(x,x):x\in \alpha\}\not\cong \emptyset$. Hence, if $\Delta\oincl\kappa'$, then
  \[\osq(\kappa) \oincl \ointer(\kappa)=(\kappa')^c\oincl \Delta^c,\]
  so $\osq(\kappa)\oincl\osq(\Delta^c)\cong\emptyset$, and (i) follows.

(i)$\Rightarrow$(ii). If (ii) is false, then there exists a non-trivial rectangle $R=\alpha\times \beta\oincl \ointer(\kappa)$ such that $\Delta\cap R\not\cong \emptyset$. This implies that $\gamma=\alpha\cap\beta\subseteq X$ is not null, and the non-trivial square $\gamma\times\gamma$ is contained in $\kappa$, which implies $\osq(\kappa)$ is not marginally null.
\end{proof}

\begin{lemma}\label{l_od}
  Let $\kappa\subseteq X\times X$ be a measurable set. If $\Delta\oincl\oclsq(\kappa)$, then
  $\Delta\oincl\osq(\kappa)$.
\end{lemma}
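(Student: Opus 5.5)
The plan is to reduce everything to a single measurable subset of $X$. Write $\Omega=\osq(\kappa)$, so that $\oclsq(\kappa)=\ocl(\Omega)$. By definition of the square $\omega$-interior, there are measurable sets $\alpha_i\subseteq X$, $i\in\bb{N}$, with $\Omega\cong\bigcup_{i\in\bb{N}}\alpha_i\times\alpha_i$. Set $\alpha=\bigcup_{i}\alpha_i$ and $\beta=X\setminus\alpha$. The claim will follow once I show that $\beta$ is $\mu$-null.

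First I reduce the conclusion to this statement. If $x\in\alpha_i$, then $(x,x)\in\alpha_i\times\alpha_i$. Hence
\[\Delta\setminus\bigcup_{i}(\alpha_i\times\alpha_i)\subseteq\{(x,x):x\in\beta\}\subseteq \beta\times X.\]
Since $\Omega$ differs from $\bigcup_i\alpha_i\times\alpha_i$ only by a marginally null set, it follows that $\Delta\setminus\Omega$ is marginally null as soon as $\beta$ is null. That is exactly $\Delta\oincl\osq(\kappa)$.

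Next I show that $\beta$ is null, using the hypothesis. Note that $\bigcup_i\alpha_i\times\alpha_i\subseteq\alpha\times\alpha$. The complement of $\alpha\times\alpha$ is $(\beta\times X)\cup(X\times\beta)$, which is a union of two rectangles and hence $\omega$-open. So $\alpha\times\alpha$ is $\omega$-closed, and it marginally contains $\Omega$. By Corollary~\ref{lem_oc}, $\ocl(\Omega)$ is the smallest $\omega$-closed set marginally containing $\Omega$, so $\ocl(\Omega)\oincl\alpha\times\alpha$. Combined with the hypothesis, this gives $\Delta\oincl\alpha\times\alpha$. In other words, the set $\{(x,x):x\in\beta\}=\Delta\setminus(\alpha\times\alpha)$ is marginally null.

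Finally, I check that a "diagonal piece" $\{(x,x):x\in\beta\}$ can be marginally null only when $\beta$ is null. Suppose it is contained in $(M\times X)\cup(X\times M)$ with $\mu(M)=0$. Then every $x\in\beta$ satisfies $x\in M$, so $\beta\subseteq M$ is null.

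I do not expect a real obstacle here. The only care needed is bookkeeping the marginal equivalences. In particular, the representation of $\osq(\kappa)$ as a countable union of squares holds only up to marginal equivalence, so I must make sure that each step uses $\oincl$ rather than literal inclusion. The argument above does this.
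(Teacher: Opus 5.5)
Your proof is correct and is essentially the paper's argument: you write $\osq(\kappa)$ as a countable union of squares $\alpha_i\times\alpha_i$, trap $\oclsq(\kappa)$ inside an $\omega$-closed set that misses the diagonal over $M=(\bigcup_i\alpha_i)^c$, and conclude that $M$ is null. The differences are cosmetic: the paper uses the $\omega$-closed set $(M\times M)^c$ where you use $\alpha\times\alpha$, and you spell out the final deduction $\Delta\oincl\osq(\kappa)$ more explicitly than the paper does.
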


\begin{proof}
Write $\osq(\kappa) \cong \bigcup_{i\in \bN}\alpha_i\times \alpha_i$ for measurable sets $\alpha_i\subseteq X$, and
let $M = \left(\bigcup_{i\in \bN}\alpha_i\right)^c$. Trivially,
$\osq(\kappa)\subseteq (M\times M)^c$ and since $(M\times M)^c$ is $\omega$-closed,
we have that $\oclsq(\kappa)\subseteq_{\omega} (M\times M)^c$, that is,
$(M\times M)\cap \oclsq(\kappa) \cong \emptyset$.
The condition $\Delta\oincl\oclsq(\kappa)$ implies that
$(M\times M)\cap \Delta \cong \emptyset$ which shows that $M$ is null, and hence
$\Delta\oincl\osq(\kappa)$.
\end{proof}

\noindent{\bf Remark. }
The proof of Lemma \ref{l_od} shows that the classes $\widehat\fF$ and $\fF$ differ
in a rather superficial way: if a masa-bimodule $\cl U$ in $\widehat\fF$ is non-degenerate
in the sense that $\overline{[\cl UH]} = H$ then $\cl U$ is in fact in $\fF$.

\begin{lemma}\label{l_f}
For any $\S_1,\S_2\in \fM$, we have $\S_1\wedge_+\S_2:=\S(\oclsq(\kappa_1\cap\kappa_2))\in \widehat{\fF}$. Moreover,
\begin{enumerate}
\item[(i)]
$\S_1\wedge_+\S_2$ is the greatest lower bound of $\S_1$ and $\S_2$ in $\widehat{\fF}$ with respect to inclusion, and
\item[(ii)] if $\S_1,\S_2\in \fF$, then $\S_1\wedge_+\S_2\in \fF$.
\end{enumerate}
\end{lemma}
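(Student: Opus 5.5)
First we identify $\S_1\wedge_+\S_2$ with $\S(\oclsq(\kappa_1\cap\kappa_2))$. By Lemma~\ref{l_lat}(ii), $\S_1\cap\S_2=\S(\kappa_1\cap\kappa_2)$, and $\kappa_1\cap\kappa_2$ is $\omega$-closed. Proposition~\ref{l_goi}(ii) then gives
\[\overline{[\R(H)^+\cap\S_1\cap\S_2]_\D}^w=\S(\oclsq(\kappa_1\cap\kappa_2)).\]
By Remark~\ref{r_gsq}, the set $\oclsq(\kappa_1\cap\kappa_2)$ is generated by squares and is $\omega$-closed. Theorem~\ref{th_psr} therefore shows that $\S_1\wedge_+\S_2$ is positive-full, hence strongly reflexive.

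Next we check self-adjointness. A positive-full subspace $\RefHull(\R)$ with $\R\subseteq\R(H)^+$ equals $\S(\lambda)$ with $\lambda=\oclsq(\cdot)$ generated by squares. Squares are symmetric, and $\osq$ and $\ocl$ commute with transposition, so $\lambda\cong\lambda^t$. Hence $\S(\lambda)^*=\S(\lambda^t)=\S(\lambda)$. It is easier still to note that the generating set $\R(H)^+\cap\S_1\cap\S_2$ is self-adjoint and $\D$ is self-adjoint, so the WOT-closed span is self-adjoint. Either way, $\S_1\wedge_+\S_2\in\widehat\fF$.

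For (i), the conclusion $\S_1\wedge_+\S_2\subseteq\S_1\cap\S_2$ is immediate. Now let $\S=\S(\lambda)\in\widehat\fF$ with $\S\subseteq\S_1\cap\S_2$. Then $\lambda\oincl\kappa_1\cap\kappa_2$. Since $\S$ is positive-full, Theorem~\ref{th_psr} gives $\lambda\cong\oclsq(\lambda)$. Monotonicity of $\oclsq$ then yields $\lambda\oincl\oclsq(\kappa_1\cap\kappa_2)$, so $\S\subseteq\S_1\wedge_+\S_2$.

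For (ii), take $\S_1,\S_2\in\fF$. By Proposition~\ref{p_llt}, both $\kappa_i$ are positivity domains, so $\Delta\oincl\kappa_1\cap\kappa_2$. The resulting space is already self-adjoint, so it remains to show $I\in\S_1\wedge_+\S_2$, which amounts to $\Delta\oincl\oclsq(\kappa_1\cap\kappa_2)$. This is the main obstacle, because the diagonal condition alone does not produce squares.

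To get the squares, we use positive-fullness of each $\S_i$. By Theorem~\ref{th_psr}, $\kappa_i\cong\oclsq(\kappa_i)$, and together with $\Delta\oincl\kappa_i$, Lemma~\ref{l_od} gives $\Delta\oincl\osq(\kappa_i)$. Write $\osq(\kappa_1)\cong\bigcup_i\alpha_i\times\alpha_i$ and $\osq(\kappa_2)\cong\bigcup_j\beta_j\times\beta_j$. By the argument in the proof of Lemma~\ref{lem_ointer}, $\osq(\kappa_1)\cap\osq(\kappa_2)\cong\bigcup_{i,j}(\alpha_i\cap\beta_j)\times(\alpha_i\cap\beta_j)$. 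This set is contained in $\osq(\kappa_1\cap\kappa_2)$, and it marginally contains $\Delta$, since $\Delta\oincl\osq(\kappa_1)\cap\osq(\kappa_2)$. Hence $\Delta\oincl\osq(\kappa_1\cap\kappa_2)\oincl\oclsq(\kappa_1\cap\kappa_2)$, so $I$ lies in the space and $\S_1\wedge_+\S_2\in\fF$.
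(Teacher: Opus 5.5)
Your proof is correct and takes essentially the same route as the paper. The first part mirrors the argument for $\wedge$ in Lemma~\ref{l_lat}, with Proposition~\ref{l_goi}(ii) and Theorem~\ref{th_psr} replacing their rectangle counterparts, and part (ii) uses Lemma~\ref{l_od} together with the square version of Lemma~\ref{lem_ointer}(iv), exactly as the paper does; your explicit check of self-adjointness, which the paper leaves implicit, is a useful addition, though you should say $\bigcup_{i,j}(\alpha_i\cap\beta_j)\times(\alpha_i\cap\beta_j)\oincl\osq(\kappa_1\cap\kappa_2)$ (marginally contained) rather than ``contained in''.
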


\begin{proof}
Apart from assertion (ii), the arguments are identical to the corresponding arguments for $\wedge$ in Lemma~\ref{l_lat}: we simply substitute square $\omega$-interiors, Theorem~\ref{th_psr} and Proposition~\ref{l_goi} (ii), where in the previous proof we used $\omega$-interiors, Theorem~\ref{th_sr} and Proposition~\ref{l_goi} (i).

To show (ii): suppose that $\S_1,\S_2\in \fF$. Then $\Delta\oincl\kappa_i\cong\oclsq(\kappa_i)$ for $i=1,2$, where $\kappa_i=\osupp(\S_i)$. By Lemma~\ref{l_od}, we have $\Delta\oincl\osq(\kappa_i)$ for $i=1,2$; hence, using Lemma \ref{lem_ointer},
$$\Delta\oincl\osq(\kappa_1)\cap\osq(\kappa_2)\cong \osq(\kappa_1\cap\kappa_2).$$
It follows that $\Delta\oincl\oclsq(\kappa_1\cap\kappa_2)=\osupp(\S_1\wedge_+\S_2)$, so
$\S_1\wedge_+\S_2\in \fF$.
\end{proof}

We then have the lattice-theoretic properties for some subclasses of $\fM$.

\begin{theorem}\label{th_com_rest}
The following hold:
\begin{enumerate}
\item the set $\widehat\fS$ is a Boolean subalgebra of $\fM$;
\item $(\widehat\fF,\vee,\wedge_+)$ is a lattice which contains $\fF$ as a sublattice;
\item the set $\fN$ is closed under complementation.
\end{enumerate}
\end{theorem}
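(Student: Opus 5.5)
Everything will be translated into statements about supports, using Lemma~\ref{l_lat}, Theorem~\ref{th_com} and Lemma~\ref{l_f}. First we record how adjoints act: for $\kappa$ generated by rectangles, $\S(\kappa)^*=\S(\kappa^t)$. Hence $\S(\kappa)$ is self-adjoint if and only if $\kappa\cong\kappa^t$. Since $\ointer$, $\osq$ and $\ocl$ commute with the transpose, so do $\oclint$, $\oclsq$ and the complement operation $\kappa\mapsto\kappa'=\ointer(\kappa)^c$.

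\emph{(i).} Let $\kappa_1,\kappa_2$ be symmetric and generated by rectangles. Then $\kappa_1\cup\kappa_2$ is symmetric, and so is $\oclint(\kappa_1\cap\kappa_2)$, since the transpose commutes with $\oclint$ and $\kappa_1\cap\kappa_2$ is symmetric. Also $\kappa_1'=\ointer(\kappa_1)^c$ is symmetric. By Lemma~\ref{l_lat}, $\widehat\fS$ is therefore closed under $\vee$, $\wedge$ and ${}'$. Finally $\mathbf 0,\mathbf 1\in\widehat\fS$, so $\widehat\fS$ is a Boolean subalgebra of $\fM$.

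\emph{(ii).} Here we need closure of $\widehat\fF$ under $\vee$. By Theorem~\ref{th_psr}, $\S(\kappa)\in\widehat\fF$ exactly when $\kappa$ is symmetric and generated by squares; squares are symmetric, so the symmetry is automatic.
\begin{itemize}
\item[$\cdot$] Write $\kappa_i\cong\ocl(\Omega_i)$ with $\Omega_i$ square $\omega$-open, as in Remark~\ref{r_gsq}.
\item[$\cdot$] By Corollary~\ref{lem_oc}(iv), $\kappa_1\cup\kappa_2\cong\ocl(\Omega_1\cup\Omega_2)$.
\item[$\cdot$] The set $\Omega_1\cup\Omega_2$ is square $\omega$-open, so Remark~\ref{r_gsq} shows $\kappa_1\cup\kappa_2$ is generated by squares. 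Thus $\S_1\vee\S_2\in\widehat\fF$, and it is the least upper bound there because it is already the least upper bound in $\fM$.
\end{itemize}
Lemma~\ref{l_f}(i) supplies the greatest lower bound $\wedge_+$, so $(\widehat\fF,\vee,\wedge_+)$ is a lattice. For $\fF$:
\begin{itemize}
\item[$\cdot$] If $\Delta\oincl\kappa_1$, then $\Delta\oincl\kappa_1\cup\kappa_2$, so $\fF$ is closed under $\vee$.
\item[$\cdot$] Lemma~\ref{l_f}(ii) gives closure under $\wedge_+$.
\end{itemize}
Hence $\fF$ is a sublattice.

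\emph{(iii).} Let $\S(\kappa)\in\fN$, so $\kappa$ is a positivity domain with $\osq(\kappa)\cong\emptyset$. By Lemma~\ref{lem_c}, $\Delta\oincl\kappa'$. The set $\kappa'=\ocl(\kappa^c)$ is $\omega$-closed, and by the proof of Theorem~\ref{th_com} it is generated by rectangles. It is symmetric by the transpose remark, so $\kappa'$ is a positivity domain and $\S(\kappa')\in\fS$ by Proposition~\ref{p_llt}.

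It remains to show that $\S(\kappa')$ is positive-null, i.e.\ $\osq(\kappa')\cong\emptyset$. By Lemma~\ref{lem_c} applied to $\kappa'$, this is equivalent to $\Delta\oincl(\kappa')'\cong\kappa$, which holds because $\kappa$ is a positivity domain.

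I expect this double application of Lemma~\ref{lem_c}, together with checking that $\kappa'$ keeps all the positivity-domain properties, to be the only delicate point. The rest is bookkeeping with the commutation of the transpose with the $\omega$-operations.
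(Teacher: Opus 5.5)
Your proposal is correct and follows essentially the same route as the paper. You translate everything to supports, handle joins in $\widehat\fF$ and $\fF$ through square $\omega$-open sets and Corollary~\ref{lem_oc}(iv), take $\wedge_+$ from Lemma~\ref{l_f}, and apply Lemma~\ref{lem_c} twice together with $(\kappa')'\cong\kappa$ for $\fN$. The differences are cosmetic: you check self-adjointness of $\vee$ and $\wedge$ through symmetry of supports rather than weak continuity of the adjoint. Also, the fact that $\kappa'$ is generated by rectangles follows most directly from Remark~\ref{r_gsq} applied to the $\omega$-open set $\kappa^c$, rather than from the proof of Theorem~\ref{th_com}.
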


\begin{proof}
Let $\S_1,\S_2\in \fM$. By Theorem~\ref{th_sr}, we have $\S_i = \S(\kappa_i)$, for $i=1,2$, where
$\kappa_i\subseteq X\times X$ is generated by rectangles.

(i)
Since the adjoint operation is weakly continuous, the subspaces
$\S_1\vee \S_2$ and $\S_1\wedge \S_2$ are self-adjoint in view of their
definition (\ref{eq_defofvee}).
It suffices to show that $\widehat\fS$ is closed under complementation. Suppose that
$\cl S\in \widehat\fS$ and let
$\kappa\subseteq X\times X$ be an $\omega$-closed and symmetric set, generated by rectangles,
such that $\cl S = \cl S(\kappa)$.
If $R\subseteq_{\omega} \kappa$ is a rectangle, the symmetry of $\kappa$ implies that
$R^t \subseteq_{\omega} \kappa$. Thus, ${\rm int}_{\omega}(\kappa)$ is symmetric and hence
so is ${\rm int}_{\omega}(\kappa)^c$. By (\ref{eq_com'}), $\kappa'$ is symmetric. Thus,
$\cl S' = \cl S(\kappa')\in \widehat\fS$.

(ii)
Suppose that $\S_1,\S_2\in \fF$.
In view of Theorem \ref{th_psr} and Remark \ref{r_gsq}, let $\Omega_i$ be a square $\omega$-open set such that
$\kappa_i = \ocl(\Omega_i)$, $i = 1,2$.
Clearly, $\Omega_1 \cup \Omega_2$ is square $\omega$-open.
By Corollary \ref{lem_oc}\,(iv),
$\kappa_1\cup \kappa_2 \cong \ocl(\Omega_1\cup \Omega_2)$.
By Lemma~\ref{l_lat}, we have $\S_1 \vee \S_2 \in \fF$.
On the other hand, by Lemma~\ref{l_f}, $\S=\S_1 \wedge_+ \S_2 \in \fF$ and $\S$ is the greatest lower bound of $\S_1,\S_2$ in $\fF$. Identical arguments apply to $\S_1,\S_2\in\widehat\fF$.

(iii)
If $\kappa=\osupp(\S)$ where $\S\in \fN$, then $\Delta\oincl\kappa$ and $\osq(\kappa)$ is marginally null.
By (\ref{eq_com'}), $(\kappa')'\cong\kappa$ and hence
Lemma~\ref{lem_c} implies that $\Delta\oincl\kappa'$ and
$\osq(\kappa')$ is marginally null. Thus, $\S'=\S(\kappa')\in \fN$.

\end{proof}

\begin{remark*}
  Note that it can happen that $\kappa_1$ and $\kappa_2$ are both generated by rectangles
  but $\kappa_1\cap \kappa_2$ is not. For example, let
  $X = [0,1]$, equipped with Lebesgue measure, and let
  $\kappa_1 = \{(x,y) : x\leq y\}$ and $\kappa_2 = \{(x,y) : x\geq y\}$.
  Then $\kappa_1 \cap \kappa_2 = \{(x,x) : x\in X\}$ and its $\omega$-interior is empty, although it is not
  marginally equivalent to the empty set.
\end{remark*}

We do not know if
the operations $\wedge_+$ and $\wedge$ coincide.
The question is related to the following problem: Suppose that $\Omega$ is an $\omega$-open set and
$\kappa = \ocl(\Omega)$. Under what conditions is it true that $\Omega = \ointer(\kappa)$?

\section{Masa-bimodules of Toeplitz type}\label{s_mbtt}

We now specialise our framework to the setting of locally compact groups, focusing on operator systems of Toeplitz type, which arise from closed subsets of the group.
By translating our lattice families into this context, we demonstrate that membership in these spaces is characterised by topological properties of the underlying subset, such as symmetry and the location of the neutral element.

Fix a locally compact second countable group $G$ with neutral element $e$,
and let $X = G$, equipped with Haar measure $\mu$.
Let $\rho$ be any metric on $G$ that induces its topology, and write $B(t,\epsilon)$ for the open ball with centre $t\in G$
and radius $\epsilon > 0$.
If $E\subseteq G$ is a Borel set, we say that $t\in G$ is a \emph{point of density} for $E$ if
$\lim_{\epsilon\to 0} \frac{\mu(E\cap B(t,\epsilon))}{\mu(B(t,\epsilon))} = 1$.
We say that $G$ \emph{satisfies the Lebesgue Density Theorem} if, for every measurable subset $E\subseteq G$,
the set
$$\{t\in E : t \text{ is not a point of density for } E\}$$
is null.
It is well-known that $\bb{R}^n$, and hence any Lie group,
satisfies the Lebesgue Density Theorem.

If $E\subseteq G$, let
$$E^* = \{(s,t)\in G\times G : ts^{-1} \in E\}.$$
We denote by ${\rm int}(E)$ the topological interior of a subset $E\subseteq G$, and by
${\rm cl}(E)$ its topological closure. We let, as usual, $\partial(E) = {\rm cl}(E) \setminus {\rm int}(E)$ be the
boundary of $E$.

We recall the following result from \cite{llt}:

\begin{theorem}[{\cite[Theorem 5.2]{llt}}]\label{th_into}
Let $G$ be a locally compact group satisfying the Lebesgue Density
  Theorem and let $E\subseteq G$ be a Borel subset of positive measure. Then
\begin{equation}\label{eq_clinte}
  \ointer(E^*) \cong {\rm int}(E)^*\quad\text{and}\quad \ocl(E^*) \cong {\rm cl}(E)^*.
\end{equation}
  In particular,
  \begin{enumerate}
\item $E^*$ is a positivity domain if and only if $E$ is a symmetric set, $e\in E$, and
  $E$ is the closure of its interior;

\item $E^*$ is generated by squares
if and only if $E^*$ contains a non-trivial square,
if and only if $E$ is a symmetric set, $e\in E$,  $E$ is the closure of its interior
  and $E$ contains a symmetric open neighbourhood of $e$.
\end{enumerate}
\end{theorem}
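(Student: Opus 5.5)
The statement is quoted from \cite[Theorem 5.2]{llt}, so I will sketch a proof of it as it would be reconstructed from that source.

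\textbf{Step 1: the interior.} First prove $\ointer(E^*)\cong{\rm int}(E)^*$. For the inclusion ${\rm int}(E)^*\oincl\ointer(E^*)$, I would show that ${\rm int}(E)^*$ is $\omega$-open. This set is open in $G\times G$, and by second countability it is a countable union of products $U\times V$ of open sets. Here one uses continuity of $(s,t)\mapsto ts^{-1}$: whenever $t_0s_0^{-1}\in{\rm int}(E)$ there are neighbourhoods $U\ni s_0$, $V\ni t_0$ with $VU^{-1}\subseteq{\rm int}(E)$. Since ${\rm int}(E)^*\subseteq E^*$, the inclusion follows.

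For the converse I would take a rectangle $\alpha\times\beta\oincl E^*$ with $\alpha,\beta$ non-null, and show that it is marginally contained in ${\rm int}(E)^*$. This is where the Lebesgue Density Theorem enters, and I expect it to be the main obstacle. By the density theorem, almost every $s\in\alpha$ and $t\in\beta$ are density points. I would aim to show that for such points $ts^{-1}\in{\rm int}(E)$. If the density-point sets were exactly the neighbourhood balls this would follow from $VU^{-1}\subseteq E$. In reality one needs a Steinhaus-type argument: for density points, $(\beta\cap B(t,\epsilon))(\alpha\cap B(s,\epsilon))^{-1}$ contains a neighbourhood of $ts^{-1}$. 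This is proved via continuity of the convolution $\chi_{\beta'}*\chi_{\alpha'^{-1}}$, which is positive at points near $ts^{-1}$ because of the density at $s$ and $t$. Here one must also handle the marginally null exceptional set, taking essential versions of $\alpha$ and $\beta$. Since $E\supseteq\beta'\alpha'^{-1}$ up to the marginal exceptions, and $E$ is Borel, one needs the product set to lie in $E$ genuinely. I would instead argue that the set of $r\in G$ with $\chi_{\beta'}*\chi_{\alpha'^{-1}}(r)>0$ is an open set contained in $E$ up to a null set. Density points of that open set then lie in ${\rm int}(E)$ after replacing $E$ by its measure-theoretic regularisation, and an appropriate care about null modifications of $E$ finishes the step.

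\textbf{Step 2: the closure.} Apply Step 1 to $E^c$: $\ocl(E^*)=\ointer((E^*)^c)^c=\ointer((E^c)^*)^c\cong({\rm int}(E^c)^*)^c={\rm cl}(E)^*$.

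\textbf{Step 3: the consequences.} For (i): symmetry of $E^*$ is equivalent to $E=E^{-1}$ up to null sets. The condition $\Delta\oincl E^*$ is equivalent to $e\in E$, after replacing $E$ by ${\rm cl}(E)$. Generation by rectangles means $E^*\cong{\rm cl}({\rm int}(E))^*$ by Steps 1 and 2, which amounts to $E={\rm cl}({\rm int}E)$.

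For (ii): if $E^*$ contains a non-trivial square $\alpha\times\alpha$, then the Steinhaus argument gives that $\alpha\alpha^{-1}\subseteq E$ contains a symmetric open neighbourhood of $e$. Conversely, if $W\subseteq E$ is a symmetric open neighbourhood of $e$, then translating a small ball $B$ with $BB^{-1}\subseteq W$ by group elements gives countably many squares $(Bg)\times(Bg)$ covering $W^*$ near the diagonal. Every rectangle $U\times V\subseteq{\rm int}(E)^*$ can then be approximated by squares via the group action, so $\osq(E^*)\cong\ointer(E^*)$ and $E^*$ is generated by squares.
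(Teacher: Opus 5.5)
The paper imports this theorem from the authors' earlier work without proof, so I am assessing your sketch on its own. There is a genuine gap at the end of Step~1. It comes from a misconception that also runs through Step~3: you treat null modifications of $E$ as harmless, and they are not.

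If $F\subseteq G$ is non-empty and $r\in F$, then $F^*$ contains the graph $\{(s,rs):s\in G\}$. This graph is not marginally null: if it lay in $(M\times G)\cup(G\times M)$ with $M$ null, then $G\setminus M\subseteq r^{-1}M$, which is absurd. Since $E_1^*\triangle E_2^*=(E_1\triangle E_2)^*$, it follows that $E_1^*\cong E_2^*$ if and only if $E_1=E_2$ exactly.

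Two consequences for your Step~1:
\begin{itemize}
\item An open set contained in $E$ only up to a null set tells you nothing about ${\rm int}(E)$. Take $E=\mathbb{R}\setminus\mathbb{Q}$: its regularisation is $\mathbb{R}$, while ${\rm int}(E)=\emptyset$ and indeed $\ointer(E^*)\cong\emptyset$.
\item Replacing $E$ by its measure-theoretic regularisation therefore changes both sides of the identity you are proving.
\end{itemize}

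The fix is the observation you almost made and then abandoned. Suppose $(\alpha\times\beta)\setminus E^*\subseteq(M\times G)\cup(G\times M)$ with $M$ null, and put $\alpha'=\alpha\setminus M$, $\beta'=\beta\setminus M$. Then $\alpha'\times\beta'\subseteq E^*$ exactly, so $\beta'\alpha'^{-1}\subseteq E$ genuinely. Take $\alpha'$ of finite measure and set $\phi(r)=\mu(\beta'\cap r\alpha')$. This function is continuous, so $U=\{\phi>0\}$ is open, and $U\subseteq\beta'\alpha'^{-1}\subseteq E$; hence $U\subseteq{\rm int}(E)$.

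Now take density points $s\in\alpha'$, $t\in\beta'$ with respect to a left-invariant metric, so that $B(s,\epsilon)=sB(e,\epsilon)$. Left translation by $ts^{-1}$ carries $\alpha'\cap B(s,\epsilon)$ onto a subset of $B(t,\epsilon)$ of the same measure. For small $\epsilon$, this set and $\beta'\cap B(t,\epsilon)$ each fill more than half of $B(t,\epsilon)$. Hence $\phi(ts^{-1})>0$ and $ts^{-1}\in{\rm int}(E)$.

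This comparison needs the balls at $s$ and $t$ to be translates of one another. For the arbitrary compatible metric in your sketch it is not available, so you should fix a left-invariant metric. Step~2 is fine: the Step~1 argument never uses $\mu(E)>0$, so applying it to a possibly null $E^c$ is legitimate.

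The same injectivity fact is needed in Step~3, where several claims are wrong as written.
\begin{itemize}
\item Since $(E^*)^t=(E^{-1})^*$, symmetry of $E^*$ means $E=E^{-1}$ exactly, not up to null sets. For example, $E=\mathbb{R}\setminus\{1\}$ equals $E^{-1}$ up to a null set, yet $E^*\not\cong(E^*)^t$.
\item $\Delta\oincl E^*$ holds iff $e\in E$, with no passage to ${\rm cl}(E)$. For $E=\mathbb{R}\setminus\{0\}$ one has $0\in{\rm cl}(E)$ but $E^*\cap\Delta=\emptyset$.
\item Your claim that ``$E^*\cong{\rm cl}({\rm int}E)^*$ amounts to $E={\rm cl}({\rm int}E)$'' is precisely the injectivity of $F\mapsto F^*$, which you must state.
\end{itemize}

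In (ii), the squares $(Bg)\times(Bg)$ only cover $(BB^{-1})^*$, a neighbourhood of the diagonal. ``Approximating rectangles by squares via the group action'' does not reach pairs $(s,t)$ with $ts^{-1}$ far from $e$. The missing ingredient is the symmetry of $E$. Given $ts^{-1}\in{\rm int}(E)$, choose open sets $U\ni s$ and $V\ni t$ with $UU^{-1},VV^{-1}\subseteq W$ and $VU^{-1}\subseteq{\rm int}(E)$. By symmetry, $UV^{-1}=(VU^{-1})^{-1}\subseteq E$, so $(U\cup V)\times(U\cup V)\subseteq E^*$. By the Lindel\"of property, countably many such squares cover ${\rm int}(E)^*$. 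This gives $\osq(E^*)\cong\ointer(E^*)$, and hence $\oclsq(E^*)\cong{\rm cl}({\rm int}E)^*=E^*$.

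Finally, the middle condition of (ii) cannot by itself force symmetry or regular closedness. For instance, $E=[-1,1]\cup\{5\}\subseteq\mathbb{R}$ has $E^*$ containing a non-trivial square. So (ii) has to be read with $E^*$ assumed to be a positivity domain, and your chain of implications should be organised accordingly.
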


We note that the masa-bimodules of the form $\S(E^*)$, where $E\subseteq G$ is a closed set,
called masa-bimodules \emph{of Toeplitz type}, were studied in \cite{akt, akt2, lt}, among
others.

\begin{theorem}\label{th_toep}
Let $G$ be a locally compact group satisfying the Lebesgue Density Theorem and
let $E\subseteq G$ be a closed set. Then
$\S(E^*) \in \fM$ if and only if $E$ is the closure of its interior.
Moreover,
\begin{enumerate}
\item $\S(E^*)  \in \fS$ if and only if $E$ is a symmetric set, $e\in E$ and
$E$ is the closure of its interior;
\item $\S(E^*) \in \fF$ if and only if $E$ is a symmetric set, $e\in {\rm int}(E)$ and
$E$ is the closure of its interior;
\item $\S(E^*)\in \fN$ if and only if $E$ is a symmetric set, $e\in \partial(E)$ and $E$ is
the closure of its interior.
\end{enumerate}
In addition,
\begin{enumerate}[label={\rm (\alph*)}]
\item if $E_1$ and $E_2$ are subsets of $G$ that are the closures of their interiors, then
$$\S(E_1^*) \vee \S(E_2^*) = \S((E_1\cup E_2)^*) \text{ and } \S(E_1^*) \wedge \S(E_2^*)
= \S({\rm cl}({\rm int}(E_1\cap E_2))^*);$$
\item if $e\in \partial(E)$, then
$\S(E^*)' = \S({\rm cl}(E^c)^*);$
\item the class $\fF_{\rm Toep}$ of all masa-bimodules of Toeplitz type in $\fF$ is a sublattice of $\fS$.
\end{enumerate}
\end{theorem}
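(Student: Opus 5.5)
The whole argument runs through the identities \eqref{eq_clinte} of Theorem~\ref{th_into}. For a closed set $E$, these give $\oclint(E^*)\cong\ocl({\rm int}(E)^*)\cong{\rm cl}({\rm int}(E))^*$; the second equivalence uses Theorem~\ref{th_into} applied to ${\rm int}(E)$, and if ${\rm int}(E)=\emptyset$ both sides are empty. The first step is to show that $A^*\cong B^*$ forces $A=B$ whenever $A,B$ are closed sets equal to the closures of their interiors. Indeed, $(A\setminus B)^*\cong\emptyset$ forces $A\setminus B$ to be null. Since $A={\rm cl}({\rm int}(A))$, any point of $A\setminus B$ has a neighbourhood $U$ with $U\cap B=\emptyset$ and $U\cap{\rm int}(A)$ non-empty open, hence of positive measure; this is a contradiction.

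With this in hand, $E^*$ is generated by rectangles if and only if $E={\rm cl}({\rm int}(E))$, which by Theorem~\ref{th_sr} gives the first assertion. For part (i), combine this with Proposition~\ref{p_llt} and Theorem~\ref{th_into}\,(i). For part (ii), use Theorem~\ref{th_psr} together with Theorem~\ref{th_into}\,(ii). Since $E$ is symmetric, containing a symmetric open neighbourhood of $e$ is the same as $e\in{\rm int}(E)$, and in that case $\Delta\oincl E^*$ is automatic.

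For part (iii), $\S(E^*)\in\fN$ means $\S(E^*)\in\fS$ and $\osq(E^*)\cong\emptyset$ (Remark after Definition~\ref{d_psr}). If $e\in E$ and $E^*$ contained a non-trivial square, then Theorem~\ref{th_into}\,(ii) would give $e\in{\rm int}(E)$. Hence, given (i), positive-nullness is equivalent to $e\notin{\rm int}(E)$, i.e.\ $e\in\partial(E)$.

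For (a), Lemma~\ref{l_lat}\,(i) gives the join as $\S(E_1^*\cup E_2^*)=\S((E_1\cup E_2)^*)$. Lemma~\ref{l_lat}\,(iii) gives the meet as $\S(\oclint(E_1^*\cap E_2^*))$. Since $E_1^*\cap E_2^*=(E_1\cap E_2)^*$ and $E_1\cap E_2$ is closed, the formula above yields ${\rm cl}({\rm int}(E_1\cap E_2))^*$. For (b), by \eqref{eq_com'} we have $(E^*)'=\ointer(E^*)^c\cong({\rm int}(E)^*)^c=(({\rm int}(E))^c)^*=({\rm cl}(E^c))^*$. The set ${\rm cl}(E^c)$ is the closure of its interior $E^c$, because $E$ is closed. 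The hypothesis $e\in\partial(E)$ is used only to land in the Toeplitz-type picture of (iii); the formula itself holds generally.

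Part (c) is the delicate step. Given $E_1,E_2$ as in (ii), the join $(E_1\cup E_2)$ is again symmetric, equal to the closure of its interior, and has $e$ in its interior, so it stays in $\fF_{\rm Toep}$ by (a) and (ii). For the meet, the set $F={\rm cl}({\rm int}(E_1\cap E_2))$ is symmetric, since inversion is a homeomorphism, and $e\in{\rm int}(E_1)\cap{\rm int}(E_2)\subseteq{\rm int}(F)$, so $\S(F^*)\in\fF$ by (ii). Thus $\fF_{\rm Toep}$ is closed under the lattice operations $\vee,\wedge$ of $\fS$, which are those of $\fM$ by Theorem~\ref{th_com_rest}\,(i). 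If the intended meet is $\wedge_+$, I also need $\wedge=\wedge_+$ here. This follows from Lemma~\ref{l_f}: $\oclsq\oincl\oclint$, and by Theorem~\ref{th_psr} the set $F^*$ is generated by squares. Since $F^*\oincl E_1^*\cap E_2^*$, we get $F^*\cong\oclsq(F^*)\oincl\oclsq(E_1^*\cap E_2^*)\oincl F^*$, so the two meets coincide. The main obstacle I expect is the uniqueness step $A^*\cong B^*\Rightarrow A=B$, along with the degenerate case ${\rm int}(E)=\emptyset$ in applying Theorem~\ref{th_into}.
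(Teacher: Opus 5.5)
Parts (i)--(iii), (a), (b) and (c) follow the paper's proof. Parts (i)--(iii) go through Proposition~\ref{p_llt} and Theorems~\ref{th_psr} and~\ref{th_into}. Parts (a) and (b) go through Lemma~\ref{l_lat}, \eqref{eq_com'} and \eqref{eq_clinte}. For (c) you show $\wedge=\wedge_+$ on $\fF_{\rm Toep}$; your chain $F^*\cong\oclsq(F^*)\oincl\oclsq(E_1^*\cap E_2^*)\oincl F^*$ is the support-level form of the paper's ``largest positive-full subsystem'' argument.

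The gap is in the ``only if'' half of the first assertion. There you have $E^*\cong\oclint(E^*)\cong F^*$ with $F={\rm cl}({\rm int}(E))\subseteq E$, and you must conclude $E\setminus F=\emptyset$. Your uniqueness lemma, applied with $A=E$ and $B=F$, assumes $E={\rm cl}({\rm int}(E))$, which is exactly what is to be proved, so the step is circular. The lemma's mechanism also cannot be adapted. It only extracts from $(E\setminus F)^*\cong\emptyset$ that $E\setminus F$ is Haar-null, and a nonempty null remainder is precisely the case that must be ruled out. For instance, $E=[-1,1]\cup\{5\}\subseteq\bR$ is closed and not the closure of its interior. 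Yet $E^*\setminus F^*$ is a line, hence $\mu\times\mu$-null, so no argument that sees only product measure can detect the difference.

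The missing observation is that marginal nullity is far more restrictive than product-nullity. For every $t\in G$, the set $\{t\}^*=\{(s,ts):s\in G\}$ is not marginally null. Indeed, $\{t\}^*\subseteq (M\times G)\cup(G\times M)$ with $M$ null would force $G=M\cup t^{-1}M$, which is a null set. Since $(A\setminus B)^*=A^*\setminus B^*$, it follows that $A^*\cong B^*$ implies $A=B$ for \emph{arbitrary} Borel sets $A,B\subseteq G$, with no topological hypotheses. Then $E^*\cong F^*$ gives $E=F$ at once.

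The paper treats this step as immediate from Theorem~\ref{th_into}. Your parts (i)--(iii) are unaffected, because for symmetric $E$ containing $e$ the conclusion ``$E$ is the closure of its interior'' is already built into Theorem~\ref{th_into}\,(i),(ii).

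A smaller point: when $\mu(E)=0$, Theorem~\ref{th_into} does not apply to $E$ itself. In that case $E^*$ is $\mu\times\mu$-null, so $\ointer(E^*)\cong\emptyset$ directly.
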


\begin{proof}
The first statement and (i) are immediate consequences of Theorem \ref{th_into}.

(ii) follows from Theorems \ref{th_psr} and \ref{th_into}.

(iii) If $e\in {\rm int}(E)$ then, because $E$ is symmetric, $E$ contains a symmetric open set $U$ with $e\in U$, and hence by
Theorem \ref{th_into}, $\S(E^*)$ is positive-full. Thus, if $\S(E^*)$ is positive-null then $e\in E\setminus {\rm int}(E)$,
that is, $e\in \partial(E)$. Conversely, if $\S(E^*)$ is not positive-null then $\S(E^*)$
contains a positive rank one operator, and therefore $E^*$ contains a non-trivial square $\alpha\times\alpha$.
The claim follows from Theorem \ref{th_into}.

(a) is immediate from Lemma \ref{l_lat} and identities (\ref{eq_clinte}).

(b) is immediate from the definition of complementary sets and identities (\ref{eq_clinte}).

(c)
It follows from Theorem \ref{th_into}\,(ii) that if $\S(E_1^*), \S(E_2^*) \in \fF$ then
${\rm int}(E_1\cap E_2)$ is symmetric and contains $e$. By Theorem \ref{th_into}\,(ii) again,
${\rm cl}({\rm int}(E_1\cap E_2))^*$ is generated by squares; by Theorem \ref{th_psr},
$\S(E_1^*) \wedge \S(E_2^*)$ is positive-full.
Since $\S(E_1^*) \wedge_+ \S(E_2^*)$ is the largest positive-full operator $\D$-system
contained in $\S(E_1^*) \cap \S(E_2^*)$, we have that
$\S(E_1^*) \wedge \S(E_2^*) = \S(E_1^*) \wedge_+ \S(E_2^*)$.
The claim now follows from Theorem \ref{th_com_rest}\,(ii).
\end{proof}

\begin{remark*}
  Let $G$ be a locally compact group satisfying the Lebesgue Density Theorem, and let
  $V\subseteq G$ be an open symmetric dense set containing $e$, where $V\neq G$.
  By Theorem \ref{th_into}, $\ocl(V^*) \cong G\times G$, and thus
  $\ointer(\ocl(V^*))\neq V^*$. In addition, $V^*$ is $\omega$-dense but distinct, with respect to
  marginal equivalence, from $G\times G$.
\end{remark*}

We finish with an example that relates to the lattice structures examined in Section \ref{s_lmb}.

\begin{example}
We give an example showing that $\fN$ is not closed under $\vee$ or $\wedge$,
which also shows that $\fS$ is not closed under $\wedge$.
Let $X=[0,1]$, equipped with Lebesgue measure. For
  $\gamma\subseteq X$, let
  $\theta(\gamma)=\{(x,y)\in X \times X : |x-y|\in \gamma\}$. Note that sets of this form are symmetric, they contain $\Delta$ provided $0\in \gamma$, and
  $\ocl(\theta(\gamma))=\theta(\overline{\gamma})$ and
  $\ointer(\theta(\gamma))=\theta({\rm int}(\gamma))$. In particular, if
  $\gamma$ is closed and ${\rm int}(\gamma)$ is dense in $\gamma$, then $\theta(\gamma)$ is
  generated by rectangles. Moreover,
  $\osq(\theta(\gamma))\not\cong \emptyset$ if and only if $\gamma$
  contains a neighbourhood of $0$.

 Let
  \[\gamma_1=\{0\}\cup\bigcup_{k\ge0}[2^{-(2k+1)},2^{-2k}] \ \mbox{ and } \
    \gamma_2=\{0\}\cup \bigcup_{k\ge0}[2^{-(2k+2)},2^{-(2k+1)}],\]
viewed as subsets of $X=[0,1]$, and $\kappa_i= \theta(\gamma_i)$, $i = 1,2$.
For $i=1,2$, the subspace
  $\S_i=\S(\kappa_i)$ is generated by rectangles and has empty square
  $\omega$-interior, thus $\S_i\in \fN$, by Theorems~\ref{th_into} and~\ref{th_toep}.
  But $\gamma_1\cup
  \gamma_2 = X$, so $\kappa_1\cup \kappa_2=X \times X$, thus, $\S_1\vee\S_2=\S(\kappa_1\cup\kappa_2) = \cl B(H)\not\in \fN$.

Observe that $\gamma_1\cap\gamma_2$ has empty interior. It follows that $\S_1\wedge\S_2=\{0\}$, which is not in $\fS$, and hence is not in $\fN$. Therefore neither $\fN$ nor $\fS$ is closed under $\wedge$.
\end{example}

\end{document}